\newcommand{\nc}{\newcommand}
\nc{\one}{\mbox{\bf 1}}
\nc{\invtensor}{\underset{\leftarrow}{\otimes}}
\nc{\const}{\operatorname{const}}
\nc{\ad}{\operatorname{ad}}
\nc{\tr}{\operatorname{tr}}
\nc{\tp}{\operatorname{top}}
\nc{\rank}{\operatorname{rank}}
\nc{\corank}{\operatorname{corank}}
\nc{\codim}{\operatorname{codim}}
\nc{\sdim}{\operatorname{sdim}}
\nc{\mult}{\operatorname{mult}}
\nc{\spn}{\operatorname{span}}
\nc{\Sym}{\operatorname{Sym}}
\nc{\sym}{\operatorname{sym}}
\nc{\id}{\operatorname{id}}
\nc{\Id}{\operatorname{Id}}
\nc{\Ree}{\operatorname{Re}}
\nc{\htt}{\operatorname{ht}}
\nc{\str}{\operatorname{str}}
\nc{\Ker}{\operatorname{Ker}}
\nc{\rker}{\operatorname{rKer}}
\nc{\im}{\operatorname{Im}}
\nc{\osp}{\mathfrak{osp}}
\nc{\sgn}{\operatorname{sgn}}
\nc{\F}{\operatorname{F}}
\nc{\Mod}{\operatorname{Mod}}
\nc{\DS}{\operatorname{DS}}
\nc{\Soc}{\operatorname{Soc}}
\nc{\Inj}{\operatorname{Inj}}
\nc{\Hom}{\operatorname{Hom}}
\nc{\End}{\operatorname{End}}
\nc{\supp}{\operatorname{supp}}
\nc{\Card}{\operatorname{Card}}
\nc{\Ann}{\operatorname{Ann}}
\nc{\Ind}{\operatorname{Ind}}
\nc{\Coind}{\operatorname{Coind}}
\nc{\wt}{\operatorname{wt}}
\nc{\ch}{\operatorname{ch}}
\nc{\sch}{\operatorname{sch}}
\nc{\Stab}{\operatorname{Stab}}
\nc{\Sch}{{\mathcal S}\mbox{\em ch}}
\nc{\Irr}{\operatorname{Irr}}
\nc{\Spec}{\operatorname{Spec}}
\nc{\Prim}{\operatorname{Prim}}
\nc{\Aut}{\operatorname{Aut}}
\nc{\Ext}{\operatorname{Ext}}
\nc{\Fract}{\operatorname{Fract}}
\nc{\gr}{\operatorname{gr}}
\nc{\deff}{\operatorname{def}}
\nc{\HC}{\operatorname{HC}}
\nc{\red}{\operatorname{red}}
\nc{\wdchi}{\widetilde{\chi}}
\nc{\wdH}{\widetilde{H}}
\nc{\wdN}{\widetilde{N}}
\nc{\wdM}{\widetilde{M}}
\nc{\wdO}{\widetilde{O}}
\nc{\wdR}{\widetilde{R}}
\nc{\wdV}{\widetilde{V}}
\nc{\wdC}{\widetilde{C}}
\nc{\Obj}{\operatorname{Obj}}
\nc{\Dglie}{\operatorname{{\mathcal D}glie}}
\nc{\Fin}{\operatorname{{\mathcal F}in}}
\nc{\Adm}{\operatorname{\mathcal{A}dm}}
\nc{\Sg}{{\cS(\fg)}}
\nc{\Shg}{{\cS(\fhg)}}
\nc{\Ug}{{\cU(\fg)}}
\nc{\Uhg}{{\cU(\fhg)}}
\nc{\Sh}{{\cS(\fh)}}
\nc{\Uh}{{\cU(\fh)}}
\nc{\Uhh}{{\cU(\fhh)}}
\nc{\Zg}{{{\mathcal{Z}}(\fg)}}
\nc{\Vir}{{\mathcal{V}ir}}
\nc{\NS}{{\mathcal{N}S}}
\nc{\tZg}{{\widetilde{\mathcal Z}({\mathfrak g})}}
\nc{\Zk}{{\mathcal Z}({\mathfrak k})}
\nc{\Up}{{\mathcal U}({\mathfrak p})}
\nc{\Ah}{{\mathcal A}({\mathfrak h})}
\nc{\Ag}{{\mathcal A}({\mathfrak g})}
\nc{\Ap}{{\mathcal A}({\mathfrak p})}
\nc{\Zp}{{\mathcal Z}({\mathfrak p})}
\nc{\cR}{\mathcal R}
\nc{\cS}{\mathcal S}
\nc{\cT}{\mathcal{T}}
\nc{\cX}{\mathcal X}
\nc{\cA}{\mathcal A}
\nc{\cU}{\mathcal U}
\nc{\cZ}{\mathcal Z}
\nc{\cM}{\mathcal M}
\nc{\cL}{\mathcal L}
\nc{\cF}{\mathcal F}
\nc{\fg}{\mathfrak g}
\nc{\fo}{\mathfrak o}
\nc{\CO}{\mathcal O}
\nc{\CR}{\mathcal R}
\nc{\Cl}{\mathcal {C}\ell}
\nc{\cW}{\mathcal{W}}
\nc{\bM}{\mathbf{M}}
\nc{\bL}{\mathbf{L}}
\nc{\bN}{\mathbf{N}}
\nc{\zq}{\mathpzc q}
\nc{\fl}{\mathfrak l}
\nc{\fn}{\mathfrak n}
\nc{\fm}{\mathfrak m}
\nc{\fp}{\mathfrak p}
\nc{\fh}{\mathfrak h}
\nc{\ft}{\mathfrak t}
\nc{\fk}{\mathfrak k}
\nc{\fb}{\mathfrak b}
\nc{\fs}{\mathfrak s}
\nc{\fB}{\mathfrak B}
\nc{\vareps}{\varepsilon}
\nc{\varesp}{\varepsilon}
\nc{\veps}{\varepsilon}
\nc{\fsl}{\mathfrak{sl}}
\nc{\fgl}{\mathfrak{gl}}
\nc{\fso}{\mathfrak{so}}
\nc{\fpq}{\mathfrak{pq}}
\nc{\fq}{\mathfrak q}
\nc{\fsq}{\mathfrak{sq}}
\nc{\fpsl}{\mathfrak{psl}}
\nc{\fhg}{\hat{\fg}}
\nc{\fhn}{\hat{\fn}}
\nc{\fhh}{\hat{\fh}}
\nc{\fhb}{\hat{\fb}}
\nc{\hrho}{\hat{\rho}}
\nc{\hsl}{\hat{\fsl}}
\nc{\fpo}{\mathfrak{po}}
\nc{\dirlim}{\underset{\rightarrow}{\lim}\,}
\nc{\nen}{\newenvironment}
\nc{\ol}{\overline}
\nc{\ul}{\underline}
\nc{\ra}{\rightarrow}
\nc{\lra}{\longrightarrow}
\nc{\Lra}{\Longrightarrow}
\nc{\bo}{\bar{1}}
\nc{\Lla}{\Longleftarrow}
\nc{\Llra}{\Longleftrightarrow}
\nc{\thla}{\twoheadleftarrow}
\nc{\lang}{(}
\nc{\rang}{)}
\nc{\hra}{\hookrightarrow}
\nc{\iso}{\overset{\sim}{\lra}}
\nc{\ssubset}{\underset{\not=}{\subset}}
\nc{\vac}{|0\rangle}
\nc{\Thm}[1]{Theorem~\ref{#1}}
\nc{\Prop}[1]{Proposition~\ref{#1}}
\nc{\Lem}[1]{Lemma~\ref{#1}}
\nc{\Cor}[1]{Corollary~\ref{#1}}
\nc{\Conj}[1]{Conjecture~\ref{#1}}
\nc{\Claim}[1]{Claim~\ref{#1}}
\nc{\Defn}[1]{Definition~\ref{#1}}
\nc{\Exa}[1]{Example~\ref{#1}}
\nc{\Rem}[1]{Remark~\ref{#1}}
\nc{\Note}[1]{Note~\ref{#1}}
\nc{\Quest}[1]{Question~\ref{#1}}
\nc{\Hyp}[1]{Hypoth\`ese~\ref{#1}}
\begin{document}
\setcounter{section}{-1}
\setcounter{tocdepth}{1}

\title{On DS functor for affine Lie superalgebras}
\author{Maria Gorelik,  Vera Serganova }

\address[]{Dept. of Mathematics, The Weizmann Institute of Science,Rehovot 7610001, Israel}
\email{maria.gorelik@weizmann.ac.il}

\address[]{Dept. of Mathematics,
University of California at Berkeley, Berkeley CA 94720}
\email{serganov@math.berkeley.edu}
\thanks{Supported in part by BSF Grant 2012227.}

\begin{abstract}
We study Duflo-Serganova functor for non-twisted affine Lie superalgebras
and affine vertex superalgebras.
\end{abstract}
\maketitle

\section{Introduction}
Let $\fg$ be a Lie superalgebra and let $x\in\fg_{\ol{1}}$ satisfy the condition 
$[x,x]=0$. The operator $\ad_x$ defines an odd square zero endomorphism of any 
$\fg$-module. This yields a functor $N\mapsto \DS_x(N):=Ker_N x/Im_N x$ from the category of 
$\fg$-modules to the category of modules over $\fg_x:=\DS_x(\fg)$.

The functor $\DS_x$ was introduced in \cite{DS} (see also~\cite{S2}) as a means to 
assign an analog of singular support to representations of Lie superalgebras.
This functor preserves superdimension and tensor product of representations.

Recall that the {\em defect} of a finite-dimensional Lie superalgebra $\fg$
is the dimension
of a maximal isotropic subspace in $\mathbb{Q}\Delta$; for $A(m-1,n-1), B(m,n), D(m,n)$ the defect is
equal to $min (m,n)$; for other cases of non Lie algebras it is one.
It is well-known that
the defect is equal to the maximal number of mutually orthogonal isotropic simple roots.
A finite-dimensional simple Lie superalgebra of zero defect
is either a simple Lie algebra or $\mathfrak{osp}(1|2l)$; the finite-dimensional modules
over these Lie superalgebras are completely reducible (and these are the only simple
Lie superalgebras with this property).

If $\fg$ is a  finite-dimensional  Lie superalgebra, then $\DS_x(\fg)$
is a  finite-dimensional  Lie superalgebra of a smaller defect.
If $\fg$ is the affinization of $\dot{\fg}$ and $x\in \dot{\fg}_{\ol{1}}$, then
$\fg_x$ is the affinization of $\dot{\fg}_x$, see~\cite{GS}.

In this paper we consider the DS functors  for  affine Lie superalgebras $\fg=\dot{\fg}^{(1)}$
and affine vertex superalgebras $V_k(\fg)$; we always assume that
$x\in \dot{\fg}_{\ol{1}}$.

Let $Vac^k(\fg)$ be a vacuum $\fg$-module of level $k$ and
$Vac_k(\fg)$ be its simple quotient.
It is easy to see that $\DS_x(Vac^k(\fg))=Vac^k(\DS_x(\fg))$.
We prove that for a non-negative integral $k$ one has
$\DS_x(Vac_k(\fg))=Vac_k(\DS_x(\fg))$ if $\dot{\fg}_x$
has zero defect and $\dot{\fg}_x\not=\mathbb{C}$, see~\Thm{thmABC}.
As a result, the corresponding vertex algebras are isomorphic, see~\Cor{corvert}.

The principal admissible modules for an affine Lie algebra $\ft$   were classified in~\cite{KW5}. A level $k$ is called
principal admissible if  $Vac_k(\ft)$ is principal admissible.
From Theorem of Arakawa~\cite{A} it follows that
for a principal admissible level $k$
the $V_k(\ft)$-modules in the category $\CO$ are completely reducible and the irreducible
 modules are  the principal admissible modules of level $k$.

We introduce the principal admissible levels for
an affine Lie superalgebra $\fg$ using Kac-Wakimoto definition for Lie algebra case.
We prove that   if $\dot{\fg}_x$ is a simple Lie algebra and $\dot{\fg}\not=B(n+1|n)$,
then for a principal admissible level $k$ one has $\DS_x(Vac_k(\fg))=Vac_k(\fg_x)$.
This implies the isomorphism of the corresponding vertex algebras.
The proof is based on Arakawa's Theorem and the fact that the maximal proper submodule
in $Vac^k(\fg_x)$ is generated by a singular vector (if $\fg_x$ is a Lie algebra,
this can be easily deduced from~\cite{F}).
We believe that the statement holds for $\fg_x=\mathfrak{osp}(1|2n)^{(1)}$, however
both Arakawa's and Fiebig's results are not established in this case.

In Section~\ref{sect1} we recall the construction of Duflo-Serganova functor
$\DS_x$ and summarize the results which we use later.

In Section~\ref{sect2} we study DS functor for integrable vacuum modules and
prove~\Thm{thmABC}.  Since integrable vacuum modules have principal admissible levels,
this theorem for the case, when $\dot{\fg}_x$ is a simple Lie algebra and $\dot{\fg}\not=B(n+1|n)$,
is a particular case of~\Thm{thmadmlevel}. However, the proof of~\Thm{thmABC} is different:
it does not use vertex algebras and Arakawa's Theorem.
In~\S~\ref{n+n-} we give an example when $\DS_x(Vac_k(\fg))\not=Vac_k(\fg_x)$
($\fg=\fsl(1|2)^{(1)}$, $k$ is critical).

In Section~\ref{sect3} we introduce the DS functor for vertex superalgebras.
In particular, we prove that if $\DS_x(Vac_k(\fg))=Vac_k(\fg_x)$, then
$\DS_x$ maps  the simple affine vertex superalgebra $V_k(\fg)$
to the simple affine  vertex superalgebra $V_k(\fg_x)$.
As a result, for any $V_k(\fg)$-module $N$ the image $\DS_x(N)$ is a  $V_k(\fg_x)$-module.

In Section~\ref{sect4} we study $Vac_k(\fg)$ if $k$ is a principal admissible level,
(this notion we define in~\S~\ref{admsup} similarly to the Lie algebra case).
In~\S~\ref{veramdsup} we prove that
$\DS_x(Vac_k(\fg))=Vac_k(\fg_x)$
if $\dot{\fg}_x$  is a simple Lie algebra and $\dot{\fg}\not=B(n+1|n)$.

Let $\dot{\Sigma}$ be a set of simple roots which contains
a maximal isotropic subset $S=\{\beta_1,\ldots,\beta_r\}$ ($r$ is the defect of
$\dot{\fg}$).
We consider $\DS_x$ for
$x=\sum_{i=1}^r x_i$, where
$x_i$ is a non-zero vector in $\fg_{\beta_i}$.
In this case $\dot{\fg}_x=\DS_x(\dot{\fg})$  has zero defect.
All our results are valid also for the composition $DS_S:=DS_{x_1}\circ DS_{x_2}\circ \ldots \circ DS_{x_r}$. Note that $DS_S(\fg)\cong \DS_x(\fg)$.

Partial results of this paper were reported at the conferences
in ESI in January 2017 and in RIMS in October 2017.

{\em Acknowledgment.}
We are grateful to T.~Arakawa, V.~Kac and M.~Wakimoto for helpful discussions.
A part of this work was done during  the first author stay in ESI in January 2017
and in MIT in August 2017.
We are grateful to these institutions for stimulating atmosphere and excellent
working conditions.

\section{Preliminaries}\label{sect1}
Throughout the paper  $\fg=\dot{\fg}^{(1)}$, where
$\dot{\fg}\not=D(2|1,a)$ is  a finite-dimensional Kac-Moody
superalgebra with a set of simple roots (a base) $\dot{\Sigma}$ and
a Cartan subalgebra $\dot{\fh}$. We denote by $\fh$ the Cartan subalgebra of
$\fg$: $\fh=\dot{\fh}\oplus\mathbb{C}K\oplus\mathbb{C}d$.

Let $\Delta\subset\fh^*$ (rep., $\dot{\Delta})$ be the set of roots of $\fg$ (resp., of $\dot{\fg}$). We denote by $\Delta_{\ol{0}}$ and $\Delta_{\ol{1}}$ the
subsets of even and odd roots. We denote by $W$ the Weyl group of $\fg_{\ol{0}}$.
Recall that $\Delta_{\ol{0}}$ is a  union of
a finite number of root systems  of affine Lie algebras
with the same minimal imaginary root $\delta$.
Throughout the paper we fix $\dot{\Delta}$ and
denote by $\Lambda_0$ the corresponding fundamental weight, i.e.
$(\Lambda_0,\delta)=1$ and $(\Lambda_0,\dot{\Delta})=(\Lambda_0,\Lambda_0)=0$.

We assume that $\dot{\Delta}$ is indecomposable
(i.e., $\fg, \dot{\fg}$ are quasisimple in the sense of~\cite{S3}).
We fix $\Delta_{\ol{0}}^+$
and consider the subsets of positive roots $\Delta^+$
which contain $\Delta_{\ol{0}}^+$. The choice of $\Delta^+$ gives a triangular decomposition
of $\fg$, compatible with the triangular decomposition of $\fg_{\ol{0}}$,
corresponding to $\Delta_{\ol{0}}^+$. For a fixed subset of positive roots $\Delta^+$
we denote by $\Sigma$ the corresponding base (i.e., the set of simple roots)
and by $\rho$  the corresponding Weyl vector.
For a fixed base $\Sigma$  we denote by $\alpha_0$ the affine root, i.e.
$\Sigma=\dot{\Sigma}\cup\{\alpha_0\}$, where $\dot{\Sigma}$ is the base of $\dot{\Delta}^+$.

We denote by $\CO$ the BGG category of finitely generated $\fg$-modules
with a diagonal action of $\fh$ and a locally finite action of $\fg_{\alpha}$
with $\alpha\in\Delta_{\ol{0}}^+$. The category $\CO$ is equipped by a
duality functor $N\mapsto N^{\sharp}$ and the simple modules are self-dual.

We normalize the form $(-,-)$ on $\fg$ as in~\cite{KW3}
and set
$$\dot{\Delta}^{\#}:=\{\alpha\in\dot{\Delta}_{\ol{0}}| \ (\alpha,\alpha)>0\}.$$
The corresponding algebra $\dot{\fg}^{\#}$ is a simple Lie algebra;
we denote its highest root by $\theta$.
We will use bases $\dot{\Sigma}$ such that  $\theta$ is the highest
root in ${\Delta}^+(\dot{\Sigma})$; then
$$\alpha_0=\delta-\theta.$$

Let $\dot{\Omega}$ be the Casimir operator for $\dot{\fg}$ which corresponds to the invariant bilinear form $(-,-)$, see~\cite{Kbook2}, Ch. II.
Recall that the dual Coxeter number $h^{\vee}$
is half of the eigenvalue of the Casimir operator $\dot{\Omega}$ on the adjoint representation $\dot{\fg}$ and
$h^{\vee}=(\rho,\delta)$. We always choose the Weyl vector $\rho$ in the form $\rho=h^{\vee}\Lambda_0+\dot{\rho}$.

We say that $k\in\mathbb{C}$ is non-critical if $k\not=-h^{\vee}$
and $\lambda\in\fh^*$ is non-critical if $K(\lambda)\not=-h^{\vee}$, i.e.
$(\lambda+\rho,\delta)\not=0$.

We use the following notations: if $X,Y\subset\fh^*$ we set $(X,Y)=\{(x,y)|\ x\in X,y\in Y\}$; for a vector space $V$,
 $X\subset V$ and $R\subset\mathbb{C}$ we use the notation
$RX=\{\sum_{i=1}^s r_ix_i|\ r_i\in R, x_i\in X\}$ (for instance, $\mathbb{Z}\Delta$
is the root lattice). For $S\subset \fh^*$ we set $$S^{\perp}:=\{\nu\in\fh^*|\ \forall\beta\in S\ (\beta,\nu)=0\}.$$

If $\alpha\in\Delta$ is a non-isotropic root,
we say that a $\fg$-module $N$ is $\alpha$-integrable if $\fg_{\pm\alpha}$ act locally nilpotently on $N$. We use conventions of~\cite{GK}.
We say that a $\fg$-module $N\in\CO$ is {\em integrable} if
$N$ is integrable as a $\dot{\fg}_{\ol{0}}$-module and
$N$ is $\alpha$-integrable
for each $\alpha\in\Delta$ satisfying $||\alpha||^2>0$.

 For $A(m|1)^{(1)}, C(m)^{(1)}$,  $N\in\CO$ is integrable if $N$ is integrable as
a $\fg_{\ol{0}}$-module and $\fh$ acts diagonally.

\subsection{DS functor for affine Lie superalgebras}
Take $x\in\fg_{\ol{1}}$ satisfying $[x,x]=0$.
Recall that Duflo-Serganova functor $\DS_x$ is defined by
$$\DS_x(N):=Ker_N x/Im_N x;$$
we view $\DS_x(N)$ as a module over $\fg^x$
(where $\fg^x$ is the centralizer of $x$ in $\fg$).
Note that $[x,\fg]\subset \fg^x$ acts trivially on $\DS_x(N)$ and
that $\fg_x:=\DS_x(\fg)=\fg^x/[x,\fg]$ is a Lie superalgebra. Thus
$\DS_x(N)$ is a $\fg_x$-module and $\DS_x$ is a functor from the category of $\fg$-modules to the category of $\fg_x$-modules. This is a tensor functor
($\DS_x(N\otimes N')=\DS_x(N)\otimes \DS_x(N')$, see~\cite{DS}).

An exact sequence of $\fg$-modules
$$0\to N_1\to N\to N_2\to 0$$
induces the exact sequence of $\fg_x$-modules
$$0\to E\to \DS_x(N_1)\to \DS_x(N)\to \DS_x(N_2)\to \Pi(E)\to 0.$$

Recall that for a $\fg$-module $N$ with a diagonal action of $\fh$
one has
$$\sch N:=\sum_{\nu\in\fh^*} \sdim N_{\nu}e^{\nu}.$$
If $0\to N_1\to N\to N_2\to 0$ is exact, then
$\sch \DS_x(N)=sch  \DS_x(N_1)+sch \DS_x(N_2)$.

\subsection{Choice of $x$}\label{choices}
In this paper we consider $\DS_x$ for $x\in\dot{\fg}$:
\begin{equation}\label{eqx}
x\in\dot{\fg}_{\ol{1}}, \ \ [x,x]=0.
\end{equation}

\subsubsection{Definition}
For $a\in {\fg}$ we denote by $\supp(a)$ the subset of ${\Delta}\cup\{0\}$
such that
$$a=\sum_{\beta\in\supp(a)} a_{\beta},$$
where $a_{\beta}$ is a non-zero vector in $\fg_{\beta}$.

\subsubsection{Definition}
We call $S\subset \dot{\Delta}_{\ol{1}}$ an {\em isotropic set} if $S$ is a basis
of an isotropic  subspace in $\fh^*$.

Note that if $\supp(x)$ is an isotropic set, then $x$ satisfies~(\ref{eqx}).

\subsubsection{}
Let $\dot{G}$ be the Lie group
 of $\dot{\fg}_{\ol{0}}$. By~\cite{DS}, Thm. 4.2
each $x$ satisfying~(\ref{eqx}) is
$\dot{G}$-conjugate to $x'$, where $\supp(x')$ is an isotropic set;
this gives a  one-to-one correspondence
between the $\dot{G}$-orbits for $x$ satisfying~(\ref{eqx}) and $\dot{W}$-orbit of isotropic sets in $\dot{\Delta}$. In particular, for
each $x$ satisfying~(\ref{eqx}) there exists a base $\dot{\Sigma}$ such that
$x$ is $\dot{G}$-conjugate to $x'$ such that $\supp(x')$ is an isotropic set and
$\supp(x')\subset\dot{\Sigma}$.
We call the cardinality of $\supp(x')$ the {\em rank} of $x$; $0$ has the zero rank and
the maximal rank is equal to the defect of $\dot{\fg}$.

Let $\ft$ be a Lie subalgebra of $\dot{\fg}_{\ol{0}}$ and
$N$ be a $\fg$-module which is $\ft$-finite (i.e., $\cU(\ft)v$ is finite-dimensional for each $v\in N$). Then the Lie group of $\ft$ acts on $N$. Moreover,
any element $g$ in this Lie group  induces
an isomorphism between the algebras $\fg_x$ and $\fg_{Ad_g(x)}$
and the corresponding modules $\DS_x(N)$ and $\DS_{Ad_g(x)}(N)$.

In particular, for a $\dot{\fg}$-integrable $\fg$-module $N$, this construction
gives an isomorphism between $\DS_x(N)$ and $\DS_{x'}(N)$ with $x'$ as above
($\supp(x')$ is an isotropic subset of a certain base $\dot{\Sigma}$).

\subsubsection{}\label{Sigmax}
Assume that $S:=\supp(x)$ is an isotropic set.

It is shown in~\cite{DS}, Lemma 6.3  that $\dot{\fg}_x$
 a finite-dimensional Kac-Moody superalgebra with the roots
$$\dot{\Delta}_x:=(S^{\perp}\cap\dot{\Delta})\setminus(S\cup(-S));$$
$\dot{\fg}_x$ can be identified
with a subalgebra of $\dot{\fg}$ generated by the root spaces
$\fg_{\alpha}$ with $\alpha\in\dot{\Delta}_x$ and
$\dot{\fh}_x\subset\dot{\fh}^x=\{h\in\dot{\fh}|\ S(h)=0\}$ such that
$$\dot{\fh}_x\oplus (\sum_{\beta\in
  S}\mathbb{C}h_{\beta})=\dot{\fh}^x,\,\,[\fg_{\alpha},\fg_{\alpha}]\subset
\dot{\fh}_x\,\,\forall \alpha\in \dot{\Delta}_x.$$
Moreover, $\dot{\fh}_x$
is a Cartan subalgebra of $\dot{\fg}_x$ and
$\dot{\fg}^x=\dot{\fg}_x\oplus [x,\dot{\fg}]$.
Note that $\fh^*_x$
is identified with a subspace in $\fh^*$ and  $S^{\perp}=\mathbb{C}S\oplus \fh^*_x$.

If $\dot{\Delta}_x$ is not empty, then
$\dot{\Delta}_x$ is the root system of the Lie superalgebra $\dot{\fg}_x$.
One can choose a set of simple roots $\dot{\Sigma}_x$ such that $\Delta^+(\dot{\Sigma}_x)=\Delta^+\cap \dot{\Delta}_x$.

Let $r$ be the rank of $x$ (i.e., $|S|=r$).
If $\dot{\fg}=A(m|n), B(m|n)$ or $D(m|n)$, then $\dot{\fg}_x=A(m-r|n-r), B(m-r|n-r)$ or $D(m-r|n-r)$ respectively.
If $\dot{\fg}=C(n)$, $G_3$ or $F_4$, then $r=1$ and $\dot{\fg}_x$ is the Lie algebra of type $C_{n-2}$, $A_1$ and $A_2$ respectively.
If  $\dot{\fg}=D(2|1;a)$, then $r=1$ and $\dot{\fg}_x=\mathbb C$.
One has
$$\ \text{defect}\ \dot{\fg}_x=\text{defect}\ \dot{\fg}-\text{rank} x.$$

It is easy to show (see~\cite{GS}) that $\DS_x(\fg)=\fg_x$ is the affinization of $\dot{\fg}_x$; we identify this algebra with
$$\fg_x=\sum_{s=-\infty}^{\infty} (\dot{\fg}_x t^s)\oplus\mathbb{C}K\oplus\mathbb{C}d,
\ \ \fh_x:=\dot{\fh}\oplus\mathbb{C}K\oplus\mathbb{C}d;$$
then $\Delta_x:=\Delta(\fg_x)$ is the affinization of $\dot{\Delta}_x$. One has
$$\fh^*_x=\dot{\fh}^*\oplus\mathbb{C}\delta\oplus\mathbb{C}\Lambda_0\subset\fh^*,
\ \ S^{\perp}=\fh^*_x\oplus \mathbb{C}S.$$

Set $\Delta^+_x:=\Delta^+(\Sigma)\cap \Delta_x$
and consider the corresponding triangular decomposition of $\fg_x$.
We will describe the base $\Sigma_x$ which corresponds to $\Delta^+_x$
below.

If $\dot{\Delta}_x$ is empty, then $\dot{\fg}_x=0$ or $\dot{\fg}_x=\fgl_1$.
If $\dot{\fg}_x=0$ (i.e., $\dot{\fg}=A(n|n), A(n+1|n),
B(n|n), D(n|n), C(2)$), then $\fg_x=\mathbb{C}K\times\mathbb{C}d$.
If $\dot{\fg}_x=\fgl_1$ (i.e., $\dot{\fg}=D(n+1|n)$ or $D(2|1,a)$), then
  $\fg_x=\fgl_1^{(1)}$, $\Delta^+(\fg_x)=\mathbb{Z}_{>0}\delta$ and $\Sigma_x=\{\delta\}$.

 If $\dot{\Sigma}_x$ is connected, then
$\Sigma_x:=\dot{\Sigma}_x\cup\{\delta-\theta_x\}$,
where $\theta_x$ is the maximal root in $\Delta^+(\dot{\Sigma}_x)$.

If $\dot{\Sigma}_x$ is not connected, then
$\dot{\Delta}_x=D_2$ (i.e., $\dot{\fg}=D(n+2|n)$ with $x$ of the maximal rank).
In this case $\Delta_x=D_2^{(1)}$ is a union of two copies of
$A_1^{(1)}$ with the same imaginary roots, that is
$\Sigma_x:=\dot{\Sigma}_x\cup\{\delta-\theta_x^i\}_{i=1}^2$, where
$\dot{\Delta}^+_x=\{\theta_x^1,\theta_x^2\}$.

\subsection{Casimir operator}\label{Casimir}
Take $x$ as in~(\ref{eqx}).
The bilinear form $(-,-)$ induces an invariant bilinear form $(-,-)_x$
on $\fg_x$.
If $N$ is an integrable $\fg$-module, then $\DS_x(N)$ is an integrable
$\fg_x$-module.

Let $\Omega$ be the Casimir operator for $\fg$ which corresponds to the invariant bilinear form $(-,-)$, see~\cite{Kbook2}, Ch. II.
Let $\dot{\Delta}_x\not=\emptyset$. By~\cite{GS},
the image of $\Omega$ is the Casimir operator for $\fg_x$.
This implies  $||\rho||^2=||\rho_x||^2_x$ and
\begin{equation}\label{eqCasimir}
[\DS_x(L_{\fg}(\lambda)):L_{\fg_x}(\lambda')]\not=0\ \Longrightarrow\
\ (\lambda+2\rho,\lambda)=(\lambda'+2\rho_x,\lambda')_x,
\end{equation}
where $\lambda'\in \fh_x^*=S^{\perp}/\mathbb{C}S$.

\subsection{Duality}\label{dual}
The duality in $\CO$ is defined by an anti-automorphism $\sigma$ of $\fg$
which stabilizes the elements of $\fh$. By above, $\fg_x,\fg_{\sigma(x)}$ are identified with a subalgebra of $\fg$ which is $\sigma$-stable
(in particular, $\fg_x=\fg_{\sigma(x)}$). It si not hard to see that the map
 $\Psi: \DS_x(N^{\sharp})\to (DS_{\sigma(x)}(N))^{\sharp}$ defined by
$\Psi(f)(v):=f(v)$ is an isomorphism of
$\fg_x$-modules if $N\in\CO$.

\section{Integrable vacuum modules}\label{sect2}
In this section $\dot{\fg}$ is a finite-dimensional Kac-Moody algebra
and $x\in\dot{\fg}_{\ol{1}}$ is such that $supp(x)$ has a maximal rank, that is
 $\dot{\fg}_x$ has zero defect.
Recall that $\fg_x$ is the affinization of $\dot{\fg}_x$.

\subsection{Vacuum modules}\label{vacmod}
If $\fp$ is a Kac-Moody superalgebra with a Cartan subalgebra $\ft$, we denote by $L_{\fp}(\lambda)$ a simple highest $\fp$-module with the highest weight $\lambda\in\ft^*$.
For an affine Kac-Moody superalgebra $\fp$ we denote by $Vac^k_{\fp}$ the vacuum module of level $k$ and by $\vac$ the vacuum vector.
 For $\fp=\fg$ we write simply $L(\lambda)$, $Vac^k$.
Note that $L(k\Lambda_0)=Vac_k$ is the simple quotient of
$Vac^k$ and so it does not depend
on the choice of $\Sigma$; we call $L(k\Lambda_0)$ a {\em simple vacuum module};
if $L(k\Lambda_0)$ is integrable, we call it an {\em integrable vacuum module}.

\subsubsection{}
The character of an integrable vacuum module is given by the
Kac-Wakimoto character formula, see~\cite{GK}. From the proof
it follows that an integrable vacuum module
is a unique integrable quotient of $Vac^k$ (since the proof
uses only the fact that $L(k\Lambda_0)$ is a $\fg^{\#}$-integrable quotient of
$Vac^k$, so all integrable quoteints have the same character and thus
such quotient is unique).

Recall that $\dot{\fg}^{\#}\not=D_2$.
Let $\theta$ be the highest root of $\dot{\Delta}^{\#}$
and $e\in\dot{\fg}^{\#}$ be the corresponding root vector.
 From~\cite{Kbook2}, Lem. 3.4 it follows that
 a quotient $Vac^k/I$ is $\fg^{\#}$-integrable if and only if $k\in\mathbb{Z}_{\geq 0}$
and $I$ contains $f_0^{k+1}\vac$
for $f_0:=et^{-1}$. Therefore  $L(k\Lambda_0)$ is integrable if and only if $k\in\mathbb{Z}_{\geq 0}$; in this case
$$L(k\Lambda_0)=Vac^k/I(k),\ \text{
where $I(k)$ is generated by $f_0^{k+1}\vac$}.$$
Note that the vector $f_0^{k+1}\vac$ is singular if $\delta-\theta\in\Sigma$.

\subsubsection{Remark}\label{choicexnew}
Recall that $L(k\Lambda_0)$ does not depend on the choice of $\dot{\Sigma}$.
Combining~\S~\ref{choices} and~\S~\ref{app1}, we see that
computing $\DS_x(L(k\Lambda_0))$ we can always assume that
$supp(x)$ is an isotropic set which lies in $S$ satisfying (P1), (P2), (P3) in ~\S~\ref{app1}.

\subsection{}
\begin{thm}{thmABC}
Let  $x\in\dot{\fg}_{\ol{1}}$ be such that $[x,x]=0$ and $supp(x)$ has a maximal rank.

(i)  If $\dot{\fg}_x=0$ and $k\not=-h^{\vee}$, then
$\DS_x(L(k\Lambda_0))$ is one-dimensional.

(ii) Assume that $\dot{\Sigma}$
contains $S:=\supp(x)$ and the following inclusion holds
\begin{equation}\label{QpiS}
(\mathbb{Q}_{\geq 0}\Sigma\cap S^{\perp})\subset (\mathbb{Q}S+\mathbb{Q}_{\geq 0}\Sigma_x).
\end{equation}
If $L(\lambda)$ is integrable and $(\lambda,S)=0$, then
$$\DS_x(L(\lambda))\cong L_{\fg_x}(\lambda|_{\fh_x}).$$

(iii) If $\dot{\fg}_x\not=\mathbb{C}$ and $k\in\mathbb{Z}_{\geq 0}$, one has
$$ \DS_x(L(k\Lambda_0))\cong L_{\fg_x}(k\Lambda_0).$$
\end{thm}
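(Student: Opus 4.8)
The plan is to prove the three parts in sequence, reducing (iii) to (ii) and handling (i) as the degenerate base case. For part (i), when $\dot{\fg}_x = 0$ we have $\fg_x = \mathbb{C}K \times \mathbb{C}d$, so $\DS_x(L(k\Lambda_0))$ is a module over an abelian algebra; the claim is just that it is one-dimensional. I would argue this via the Casimir constraint~(\ref{eqCasimir}): since $\fg_x$ is purely the center-plus-derivation, any weight appearing in $\DS_x(L(k\Lambda_0))$ must satisfy $(\lambda'+2\rho_x,\lambda')_x = (k\Lambda_0+2\rho, k\Lambda_0) = 0$ together with the weight-support constraints, and the superdimension preservation of $\DS_x$ forces a single surviving weight. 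The non-criticality $k \neq -h^{\vee}$ is what makes the level act nontrivially and pins down the unique weight.

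For part (ii), which is the heart of the theorem, the strategy is to compute $\DS_x(L(\lambda))$ by locating its highest weight and then showing there is nothing else. First I would use~\S\ref{choicexnew} to arrange that $\supp(x) = S \subset \dot{\Sigma}$ satisfies the positivity hypothesis~(\ref{QpiS}), so that $\Delta^+_x = \Delta^+ \cap \Delta_x$ is genuinely induced from $\Delta^+$. The vacuum vector $v_\lambda$ of $L(\lambda)$ lies in $\Ker x$ (since $S \subset \Delta^+$, $x$ raises weights) and is not in $\im x$, so its image $\bar v_\lambda$ is a nonzero highest-weight vector in $\DS_x(L(\lambda))$ of weight $\lambda|_{\fh_x}$; because $(\lambda, S) = 0$ this weight is well-defined on $\fh_x = S^\perp/\mathbb{C}S$. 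This produces a surjection (up to the submodule it generates) onto $L_{\fg_x}(\lambda|_{\fh_x})$. The real content is the reverse inclusion: that $\DS_x(L(\lambda))$ contains \emph{only} the constituent $L_{\fg_x}(\lambda|_{\fh_x})$ and no lower ones. Here I would exploit that $\DS_x(L(\lambda))$ is an integrable $\fg_x$-module (from~\S\ref{Casimir}), that $\fg_x$ has zero defect so its integrable modules are completely reducible, and then combine the Casimir identity~(\ref{eqCasimir}) with the support condition~(\ref{QpiS}) to show that every weight $\mu'$ of $\DS_x(L(\lambda))$ satisfies $\mu' \in \lambda|_{\fh_x} - \mathbb{Q}_{\geq 0}\Sigma_x$ and has the \emph{same} Casimir value; by standard highest-weight theory over a zero-defect affine algebra, these two facts together force $\mu' = \lambda|_{\fh_x}$ as the top weight of each constituent, eliminating everything below it.

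For part (iii), I would deduce it from (ii) by taking $\lambda = k\Lambda_0$ with $k \in \mathbb{Z}_{\geq 0}$. Since $(\Lambda_0, \dot\Delta) = (\Lambda_0,\Lambda_0) = 0$ and $S \subset \dot\Delta_{\ol 1}$, the condition $(\lambda, S) = 0$ holds automatically, and $L(k\Lambda_0)$ is integrable precisely for $k \in \mathbb{Z}_{\geq 0}$ by the criterion recalled in~\S\ref{vacmod}. The only point to check is the positivity hypothesis~(\ref{QpiS}) for the chosen base; by~\S\ref{choicexnew} and~\S\ref{app1} we may select $\dot\Sigma$ with $S$ inside it satisfying (P1)--(P3), which is engineered to guarantee exactly~(\ref{QpiS}). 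Then $\DS_x(L(k\Lambda_0)) \cong L_{\fg_x}((k\Lambda_0)|_{\fh_x}) = L_{\fg_x}(k\Lambda_0)$, using that $\Lambda_0|_{\fh_x}$ is the fundamental weight $\Lambda_0$ of $\fg_x$ since the level and imaginary root are preserved under $\DS_x$.

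I expect the main obstacle to be the reverse inclusion in part (ii): ruling out \emph{a priori} possible lower constituents of $\DS_x(L(\lambda))$. The subtlety is that while the $\fg$-module $L(\lambda)$ has a simple known character via Kac--Wakimoto, the cohomology $\Ker x/\im x$ can in principle have weights one would not naively predict, and the functor is only exact in the six-term sense. The leverage comes entirely from the Casimir identity~(\ref{eqCasimir}) combined with~(\ref{QpiS}): I would need to verify carefully that these genuinely confine all surviving weights to the single coset of the top weight, which amounts to checking that no strictly lower integral dominant $\fg_x$-weight in $\lambda|_{\fh_x} - \mathbb{Q}_{\geq 0}\Sigma_x$ can share the Casimir eigenvalue, a positivity/strict-monotonicity argument for the quadratic form $(-,-)_x$ on the relevant cone.
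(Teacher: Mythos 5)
Your reduction of (iii) to (ii) contains a genuine gap. You assert that the pairs $(S,\dot{\Sigma})$ of \S\ref{app1} are ``engineered to guarantee exactly (\ref{QpiS})'', but property (P3) there is explicitly conditional: the inclusion (\ref{QPiS1}) is claimed only for $\dot{\fg}\not=D(n+1|n),\,D(n+2|n)$. The exclusion of $D(n+1|n)$ is harmless for (iii), since there $\dot{\fg}_x=\mathbb{C}$, which the hypothesis rules out; but for $\dot{\fg}=D(n+2|n)$ with $x$ of maximal rank one has $\dot{\fg}_x=D_2\not=\mathbb{C}$, no available base satisfies (\ref{QpiS}), and your appeal to (ii) fails precisely in this case. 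The paper spends the second half of its proof on it: applying the six-term exact sequence to $0\to I(k)\to Vac^k\to L(k\Lambda_0)\to 0$, using uniqueness of the integrable quotient of $Vac^k(\fg_x)$, and using that $I(k)$ is generated by the singular vector $f_0^{k+1}\vac$ of weight $k\Lambda_0-(k+1)\alpha_0$ (for a base with $||\alpha_0||^2>0$), it shows that any hypothetical extra constituent $L_{\fg_x}(k\Lambda_0-\nu')$ must satisfy $(\nu',\Lambda_0)\geq k+1$; then, writing $\nu'=j\delta-s_+(\vareps_1+\vareps_{n+2})-s_-(\vareps_1-\vareps_{n+2})$ for the explicit $\Sigma_x=\{\vareps_1\pm\vareps_{n+2};\,\delta-(\vareps_1\pm\vareps_{n+2})\}$, integrability gives $0\leq s_{\pm}\leq k/2$ and the Casimir identity (\ref{eqCasimir}) gives $(k+2)j-s_+-s_-=s_+^2+s_-^2$, whence $j\leq k/2$, a contradiction. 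Nothing in your proposal substitutes for this argument, and the Casimir-plus-dominance mechanism of (ii) cannot by itself: without (\ref{QpiS}) you have no way to place $\nu'$ in $\mathbb{Q}_{\geq 0}\Sigma_x$, which is where all the positivity comes from.

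Parts (i) and (ii) of your plan do essentially match the paper, with two repairs. In (i), ``superdimension preservation'' is not an available tool here ($L(k\Lambda_0)$ is infinite-dimensional); the paper instead observes that $\Omega_x=2(K+h^{\vee})d$ acts by a scalar, so $d$ acts by a scalar when $k\not=-h^{\vee}$, which pins down a single weight, and multiplicity one follows from $\dim L(\lambda)_{\lambda-\nu}=\delta_{0,\nu}$ for $\nu\in\mathbb{Z}S$ (a consequence of $(\lambda,S)=0$) --- the same computation that justifies, in your (ii), that the highest weight vector is not in $\im x$ and that $[\DS_x(L(\lambda)):L_{\fg_x}(\lambda|_{\fh_x})]=1$; you assert this but should verify it. In (ii) your closing step is exactly the paper's: for a lower constituent $L_{\fg_x}(\lambda'-\nu')$ the Casimir identity gives $(\lambda'-\nu'+\rho_x,\nu')+(\lambda'+\rho_x,\nu')=0$, while integrability and zero defect give $(\lambda',\nu')\geq 0$, $(\lambda'-\nu',\nu')\geq 0$ and $(\rho_x,\nu')>0$; note that complete reducibility of integrable $\fg_x$-modules is not needed, only the dominance of highest weights of composition factors, and the case of empty $\Delta_x$ is disposed of directly since then (\ref{QpiS}) forces $\nu'=0$.
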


\subsection{Proof of~\Thm{thmABC}}
For (i), (iii) we set $\lambda:=k\Lambda_0\in\fh^*$ (so $L(\lambda)=L(k\Lambda_0)$) and
$S:=supp(x)$. Using~\Rem{choicexnew}, we assume for (i), (iii) that
$S,\dot{\Sigma}$ satisfies (P1), (P3) of~\S~\ref{app1}, i.e.
$S\subset\dot{\Sigma}$ and~(\ref{QpiS}) holds except for
$\dot{\fg}=D(n+2|n), D(n+1|n)$.

We introduce
$$\lambda':=\lambda|_{\fh_x}\in\fh_x^*.$$

Since $(\lambda,S)=0$ one has
$\dim L(\lambda)_{\lambda-\nu}=\delta_{0,\nu}$ for $\nu\in\mathbb{Z}S$.
Thus the singular vector in $L(\lambda)$ has a non-trivial image in $\DS_x(L(k\lambda))$
which is singular; moreover,
\begin{equation}\label{DS1}
[\DS_x(L(\lambda)):L_{\fg_x}(\lambda')]=1.
\end{equation}

For (i)  $\fg_x=\mathbb{C}K\times\mathbb{C}d$.
By~\S~\ref{Casimir}, the Casimir  $\Omega_x=2(K+h^{\vee})d$ acts on
$\DS_x(L(k\Lambda_0))$ by a scalar, so $d$ acts on $\DS_x(L(k\Lambda_0))$ by a scalar.
Now (i) follows from~(\ref{DS1}).

For (ii), (iii) assume that
$[\DS_x(L(\lambda)):L_{\fg_x}(\lambda'-\nu')]\not=0$
for some $\nu'\in \fh^*_x$ with $\nu'\not=0$. Since $\lambda'-\nu'$
is a weight of $\DS_x(L(\lambda))$,
there exists $\nu\in\fh^*$ such that $\nu|_{\fh_x}=\nu'$, $(\nu,S)=0$
and $L(\lambda)_{\lambda-\nu}\not=0$. In particular,
\begin{equation}\label{SigmaS}
\nu\in \mathbb{Z}_{\geq 0}\Sigma\cap S^{\perp}.
\end{equation}

Let us prove (ii). Combining~(\ref{SigmaS}) and~(\ref{QpiS}), $\nu\in\mathbb{Q}S+\mathbb{Q}_{\geq 0}\Sigma_x$, that is
$\nu'\in \mathbb{Q}_{\geq 0}\Sigma_x$.
If $\Delta_x$ is empty, we obtain $\nu'=0$, a contradiction.
Now we assume that $\Delta_x$ is not empty. By~\S\ref{Casimir}, $\DS_x(L(\lambda))$
is $\fg_x$-integrable, so $L_{\fg_x}(\lambda'),L_{\fg_x}(\lambda'-\nu')$
are integrable modules and $||\lambda'-\nu'+\rho_x||^2=||\lambda'+\rho_x||^2$,
that is
$$(\lambda'-\nu'+\rho_x,\nu')+(\lambda'+\rho_x,\nu')=0.$$
Since $\fg_x$ has zero defect, the integrability of
 $L_{\fg_x}(\lambda')$ and $L_{\fg_x}(\lambda'-\nu')$
gives $(\lambda',\nu'),  (\lambda'-\nu',\nu')\geq 0$ and $(\nu',\rho_x)>0$
(for $\nu'\not=0$), a contradiction. This establishes (ii).
Recall that for our choice of $(S,\dot{\Sigma})$, (\ref{QpiS}) holds except for
$\dot{\fg}=D(n+2|n), D(n+1|n)$. Thus (ii) implies (iii).

It remains to verify (iii) for $\dot{\fg}=D(n+2|n)$.
Consider the short exact sequence
$$0\to I(k)\to Vac^k\to L(k\Lambda_0)\to 0,$$
where $I(k)$ is the maximal proper submodule of $Vac^k$.
It is easy to see that $\DS_x(Vac^k)=Vac^k(\fg_x)$. Thus
the corresponding long exact sequence is
$$0\to E\to \DS_x(I(k))\xrightarrow{\phi}Vac^k(\fg_x) \xrightarrow{\psi}  \DS_x(L(k\Lambda_0))\to \Pi(E)\to 0.$$
Since $\DS_x(L(k\Lambda_0))$ is $\fg_x$-integrable, the image of
$\psi$ is an integrable quotient of $Vac^k$, that is $L_{\fg_x}(k\Lambda_0)$.
Since $L_{\fg_x}(k\Lambda_0-\nu')$ is a subquotient of $\DS_x(L(k\Lambda_0))$
and $\nu'\not=0$, it is a subquotient of $\Pi(E)$. Therefore
$\Pi(L_{\fg_x}(k\Lambda_0-\nu'))$ is a subquotient of
$\DS_x(I(k))$.  Take $\Sigma$ such that $||\alpha_0||^2>0$.
By~\S~\ref{vacmod}, $I(k)$ is generated by a singular vector of
the weight $k\Lambda_0-(k+1)\alpha_0$. Therefore
$$\lambda'-\nu'=k\Lambda_0-(k+1)\alpha_0-\mu',$$
where $\mu'=\mu|_{\fg_x}$ for some $\mu\in\fh^*$ such that
$\mu\in\mathbb{Z}_{\geq 0}\Sigma$. Then
\begin{equation}\label{jk}
(\nu',\Lambda_0)\geq k+1.
\end{equation}

Take $S:=\{\vareps_{i+1}-\delta_i\}_{i=1}^n$ and
$\dot{\Sigma}=\{\vareps_1-\vareps_2,\vareps_2-\delta_1,\ldots,
\vareps_{n+1}-\delta_n,\delta_n\pm\vareps_{n+2}\}$; then
 $\alpha_0=\delta-\vareps_1-\vareps_2$,
so $||\alpha_0||^2=2$. One has
$$\begin{array}{ll}
\Sigma_x=\{\vareps_1\pm \vareps_{n+2};\delta-(\vareps_1\pm \vareps_{n+2})\},\ \ \rho_x=2\Lambda_0+\vareps_1.
\end{array}$$

One readily sees that $(\mathbb{C}\Sigma\cap S^{\perp})\subset(\mathbb{C}S+\mathbb{C}\Sigma_x)$,
so $\nu'\in \mathbb{C}\Sigma_x$, so
$$\nu'=j\delta-s_+(\vareps_1+\vareps_{n+2})-s_-(\vareps_1-\vareps_{n+2}).$$
The integrability of $L_{\fg_x}(k\Lambda_0-\nu')$ implies
$0\leq s_{\pm}\leq k/2$. In addition,~(\ref{eqCasimir}) gives
$$(k+2)j-s_+-s_-=s_+^2+s_-^2,$$
so   $j\leq k/2$. However, $j=(\nu',\Lambda_0)\geq k+1$ by~(\ref{jk}). This contradiction completes the proof.

\subsection{Example: $\fg=\fsl(1|2)^{(1)}, k=-1$}\label{n+n-}
Take $\fg=\fsl(1|2)^{(1)}$ with $\Sigma=\{\delta-\vareps_1+\delta_2, \vareps_1-\delta_1,\delta_1-\delta_2\}$ and $S=\{\vareps_1-\delta_1\}$.
Using the character formula (3.20) in~\cite{KW4}
 it is not hard to show that $\DS_x((L(-\Lambda_0))$
is not one-dimensional.

\section{DS functor for vertex superalgebras}\label{sect3}
\subsection{Vertex algebras}
Recall that a vertex (super)algebra $V=V_{\ol{0}}\oplus V_{\ol{1}}$ is a  vector superspace endowed with a vacuum vector
$\vac$,  an even linear endomorphism $T$ and a parity preserving  linear map

$$Y:V\to (\End V)[[z,z^{-1}]],\ \ a\mapsto Y(a,z)=\sum_{n\in\mathbb{Z}} a_{(n)}z^{-n-1}$$

subject to the following axioms ($a,b\in V, m,n\in\mathbb{Z}$)

(translation covariance) $[T, Y(a,z)]=\partial_z Y(a,z)$;

(vacuum) $T\vac=0;\ \ Y(\vac,z)=Id_V;\ \ a_{(-1)}\vac=a,\ a_{(n)}\vac=0\ \text{ for }n\geq 0;$

and the locality axiom which we use in the Borcherds form

$$(a_{(m)}b)_{(n)}=\sum_{i=0}^{\infty} (-1)^i \binom{m}{i}
\bigl(a_{(m-i)}b_{(n+i)}-(-1)^{m+p(a)p(b)}b_{(m+n-i)}a_{(i)}\bigr).$$

For $m=0$ this gives
\begin{equation}\label{commab}
(a_{(0)}b)_{(n)}=[a_{(0)},b_{(n)}].
\end{equation}

Note that $T$ is ``determined'' by $Y$, i.e.
\begin{equation}\label{TTT}
Ta=a_{(-2)}\vac.
\end{equation}

\subsubsection{Modules}\label{vertmod}
A weak module over a vertex superalgebra $V$ in a vector superspace $M$ with a parity preserving linear map
$$Y^M:V\to (End M)[[z,z^{-1}]]\ \
\ \ a\mapsto Y_M(a,z)=\sum_{n\mathbb{Z}} a_{(n)}^M z^{-n-1},$$
such that for each $v\in M$ one has $ a_{(n)}^M v=0$ for $n>>0$,
$Y_M(\vac,z)=Id_M$ and $a_{(m)}^M, b^M_{(n)}$ satisfy
the locality axiom. As above, the locality axiom gives
\begin{equation}\label{commabM}
(a_{(0)}b)_{(n)}^M=[a^M_{(0)},b^M_{(n)}].
\end{equation}

An ideal of a vertex algebra is a  subspace $I\subset V$ such that
$a_{(m)}b, b_{(m)}a\in I$ for each $a\in I, b\in V,  m\in\mathbb{Z}$.
If $I$ is an ideal of $V$, then the quotient $V/I$ inherits the structure of a vertex algebra.

If $I$ is an ideal of $V$, the $V/I$-modules are the $V$-modules
 annihilated by $I$, that is $a_{(m)}^M=0$ for each $a\in I, m\in\mathbb{Z}$.
We say that an ideal $I$ is generated by a set $E$ if $I$ is a minimal ideal
containing $E$. The locality axiom implies that
 in this case $M$ is $V/I$-module
if and only if $a_{(m)}^M=0$  for each $a\in E, m\in\mathbb{Z}$.

\subsection{Definition of $\DS_x(V)$}
Let $V$ be a vertex superalgebra and
 $x\in V$ be such that
\begin{equation}\label{xvert}
x\in V_{\ol{1}},\ \  x_{(0)}x=0\ \text{ and }\ \vac\not\in Im x_{(0)}.
\end{equation}
By~(\ref{commab}) one has $x_{(0)}^2=0$.
We define the vector space $\DS_x(V)$ as follows:
$$\DS_x(V):=\Ker_{V} x_{(0)}/ Im_V x_{(0)}.$$

By the vacuum axiom, $\vac\in\Ker x_{(0)}$, so
$\vac$ has a non-zero image $\vac'\in \DS_x(V)$.
From the translation axiom  $[T,x_{(0)}]=0$, so
$T$ induces an even map $T'\in End \DS_x(V)$.

Take $b\in \Ker_V x_{(0)}$.
From~(\ref{commab}) it follows that $[b_{(n)}, x_{(0)}]=0$ for each $n$, so
$b_{(n)}$ induces $b'_{(n)}\in End (\DS_x(V))$ and $b'_{(n)}=0$
if $b\in Im_V x_{(0)}$.
This gives a parity preserving linear map
$$\DS_x(V) \to  (End \DS_x(V))[[z,z^{-1}]]\ \ \ b\mapsto Y'(b,z)=\sum_{n\in\mathbb{Z}} b'_{(n)}z^{-n-1}.$$

The space $\DS_x(V)$ equipped with $\vac', T'$ and the fields $Y'(b,z)$ form
a vertex algebra  (the
axioms for $V'$ follow
from the corresponding axioms for the vertex algebra $V$).
We denote this vertex algebra by $\DS_x(V)$.

\subsubsection{Remark}
A vertex algebra $V$  is $\mathbb{Z}_{\geq 0}$-graded if $V=\oplus_{s=0}^{\infty} V_s$ with $$deg(a_{(j)}b)=deg(a)+deg(b)-j-1,$$
where $deg$ stands for the degree of a homogeneous vector in $V$.
We claim that
the condition $\vac\not\in Im a_{(0)}$ holds for each $a\in V$
if $V$ is a $\mathbb{Z}_{\geq 0}$-graded vertex algebra with $V_0=\mathbb{C}\vac$.

Indeed, assume that $deg(a_{(0)}b)=0$ for homogeneous $a,b$. Then
 $deg(a)+deg(b)=1$, so $a$ or $b$ lie in $V_0=\mathbb{C}\vac$.
However, $\vac_{(0)}=0$ and $a_{(0)}\vac=0$ for each $a$, that is
$a_{(0)}b=0$. Hence $\vac\not\in Im a_{(0)}$ for each $a\in V$.

\subsubsection{Modules}
Let $M$ be a $V$-module.
The condition $x_{(0)}x=0$ gives $[x_{(0)}^M,x_{(0)}^M]=0$.
We introduce
$$\DS_x(M)=\Ker_M x_{(0)}^M/ Im_M x_{(0)}^M.$$

Using~(\ref{commabM}) it is easy to check
that $\DS_x(M)$ inherits a structure of $\DS_x(V)$-module
(i.e., $Y^M$ induces a map $\DS_x(V)\to (End \DS_x(M))[[z,z^{-1}]]$
which satisfy the corresponding axioms).

\subsection{Affine vertex superalgebras}
Let $\dot{\fg}$ be a finite-dimensional Kac-Moody superalgebra and
let $\fg=\dot{\fg}^{(1)}$.

By~\cite{FZ}, the
vacuum module $Vac^k(\fg)$ has a structure of a vertex superalgebra with
\begin{equation}\label{Yxt}
Y(at^{-1}\vac,z)=\sum_{n\in \mathbb{Z}} (at^n)z^{-n-1}\ \text{ for }a\in\dot{\fg}.
\end{equation}
We denote this vertex superalgebra by $V^k(\fg)$.

The weak $V^k(\fg)$-modules are the restricted $[\fg,\fg]$-module of level $k$
($M$ is "restricted" if for each $v\in M$ one has $(\dot{\fg} t^s)v=0$ for $s>>0$).

The above correspondence between  $V^k(\fg)$-modules and $[\fg,\fg]$-modules
implies that the maximal proper submodule $I(k)$ of  $Vac^k(\fg)$
is the maximal ideal in the vertex algebra $V^k(\fg)$.
Moreover, if $I(k)$ is generated by $E$ as a $\fg$-module, then
$I(k)$ is generated by $E$ as an ideal in $V^k(\fg)$. In particular,
$L(k\Lambda_0)$ inherits a structure of
a vertex superalgebra, which is simple; it is denoted by
$V_k(\fg)$.

For $\dot{\fg}=0$ or $\dot{\fg}=\mathbb{C}$, the vacuum module
$Vac^k(\fg)$ is  one-dimensional and $V^k(\fg)=V_k(\fg)$
is a one-dimensional vertex algebra.

If $\fg$ is a Lie algebra, then
for $k\in\mathbb{Z}_{\geq 0}$ the $V_k(\fg)$-modules
correspond to the restricted integrable $\fg$-modules of level $k$,
see~\cite{DLM}, Thm. 3.7; these
modules are completely reducible and the irreducible modules are
the integrable highest weight modules of level $k$ (there are infinitely many
such modules and $V_k(\fg)$ is a {\em rational} vertex algebra).
The following result was proven in~\cite{GS}, Thm. 6.3.1 for $\dot{\fg}^{\#}\not=D_2$.

\subsubsection{}
\begin{thm}{thmVkmod}
If $L(k\Lambda_0)$ are integrable, then $V_k(\fg)$-modules
are the restricted $[\fg,\fg]$-module of level $k$ which are
$[\fg^{\#}, \fg^{\#}]$-integrable.
\end{thm}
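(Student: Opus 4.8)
The plan is to reduce the statement to the already known Lie algebra case. By construction $V_k(\fg)=V^k(\fg)/I(k)$, where $I(k)$ is the maximal ideal of the vertex superalgebra $V^k(\fg)=Vac^k(\fg)$. By \S~\ref{vertmod} the weak $V^k(\fg)$-modules are exactly the restricted $[\fg,\fg]$-modules of level $k$, and such an $M$ is a $V_k(\fg)$-module if and only if $a_{(m)}^M=0$ for all $m\in\mathbb{Z}$ and all $a$ in a set of generators of the ideal $I(k)$; since generation of $I(k)$ as a $\fg$-module implies generation as an ideal, it suffices to take a set of $\fg$-module generators. For $\dot{\fg}^{\#}\not=D_2$ the theorem is \cite{GS}, Thm.~6.3.1, so the point is to make the argument uniform and to cover $\dot{\fg}^{\#}=D_2$.

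First I would pin down the generators. Integrability of $L(k\Lambda_0)$ forces $k\in\mathbb{Z}_{\geq0}$, and by \S~\ref{vacmod} the ideal $I(k)$ is generated as a $\fg$-module by the singular vector $f_0^{k+1}\vac$, where $f_0=et^{-1}$ and $e$ is the root vector of the highest root $\theta$ of $\dot{\fg}^{\#}$ (and by the two vectors $f_{0,i}^{k+1}\vac$ attached to the highest roots $\theta_1,\theta_2$ of the two simple factors when $\dot{\fg}^{\#}=D_2$). The key point is that these states lie in the vertex subalgebra $V^k(\fg^{\#})\subset V^k(\fg)$ generated by $\dot{\fg}^{\#}$: by~(\ref{Yxt}) the field attached to $et^{-1}\vac$ is $\sum_n(et^n)z^{-n-1}$, and $Y(f_0^{k+1}\vac,z)$ is the normally ordered $(k+1)$-st power of this current, so all of its modes are polynomials in the operators $et^n\in\fg^{\#}$. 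Consequently the vanishing condition ``$a_{(m)}^M=0$ for all $m$ and all generators $a$'' depends only on the restriction $M|_{\fg^{\#}}$.

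Next I would invoke the Lie algebra case. The restriction $M|_{\fg^{\#}}$ is a restricted $[\fg^{\#},\fg^{\#}]$-module of level $k$, i.e.\ a weak $V^k(\fg^{\#})$-module, and the vectors above are precisely the $\fg^{\#}$-module generators of the maximal ideal of $V^k(\fg^{\#})$. Hence the vanishing condition is equivalent to $M|_{\fg^{\#}}$ being a $V_k(\fg^{\#})$-module. Since $\fg^{\#}=(\dot{\fg}^{\#})^{(1)}$ is an affine Lie algebra, the result quoted just before the theorem (\cite{DLM}, Thm.~3.7) identifies its $V_k$-modules with the restricted integrable level-$k$ modules, i.e.\ the $[\fg^{\#},\fg^{\#}]$-integrable ones. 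Chaining the equivalences gives the theorem.

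The main obstacle is the case $\dot{\fg}^{\#}=D_2$, the one not treated in \cite{GS}. Here $\dot{\fg}^{\#}$ is semisimple rather than simple, and $\fg^{\#}$ is a union of two copies of $A_1^{(1)}$ sharing the imaginary root $\delta$, as in \S~\ref{Sigmax}; I would run the reduction above on each $A_1^{(1)}$ factor, so that $I(k)$ requires the two singular vectors as generators and $[\fg^{\#},\fg^{\#}]$-integrability becomes the conjunction of the two factorwise conditions. The remaining technical points to verify carefully are the identity expressing $Y(f_0^{k+1}\vac,z)$ as a normally ordered power of the current, and the passage from generators of $I(k)$ as a $\fg$-module to generators as a vertex ideal supplied by \S~\ref{vertmod}.
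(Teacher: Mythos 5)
Your proposal is correct and takes essentially the same route as the paper: both reduce the annihilation condition to the vertex subalgebra $V^k(\fg^{\#})\subset V^k(\fg)$, using that $I(k)$ is generated (as a $\fg$-module, hence as a vertex ideal by \S~\ref{vertmod}) by singular vectors lying in $Vac^k(\fg^{\#})$ — two of them when $\dot{\fg}^{\#}=D_2$ — and then invoke \cite{DLM}, Thm.~3.7 for the affine Lie algebra $\fg^{\#}$. The only cosmetic difference is that the paper takes the whole maximal submodule $I^{\#}$ of $Vac^k(\fg^{\#})$ (resp.\ $I'+I''$ for $D_2$) as the generating set, so it never needs your explicit observation that $Y(f_0^{k+1}\vac,z)$ is a power of the current $\sum_n (et^n)z^{-n-1}$; that observation is correct and merely makes explicit why the vanishing condition depends only on $M|_{\fg^{\#}}$, which the paper leaves implicit.
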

\begin{proof}
Let $I(k)$ be the maximal proper submodule of $Vac^k$, so  $L(k\Lambda_0)=Vac^k/I(k)$.

For $\dot{\fg}^{\#}\not=D_2$ consider
 the natural embedding
$Vac^k(\fg^{\#})\subset Vac^k$ and denote by  $I^{\#}$
the maximal proper submodule of $Vac^k(\fg^{\#})$.

If $\dot{\fg}^{\#}=D_2=A_1\times A_1$ consider  the natural embeddings
$Vac^k(A_1)', Vac^k(A_1)''\subset Vac^k$ which correspond to two copies of $A_1$
in $D_2$; let $I',I''$ be the maximal proper submodules in $Vac^k(A_1)', Vac^k(A_1)''$
respectively. Set $I^{\#}:=I'+I''$.

By~\S~\ref{vacmod}, the submodule
$I(k)$ is generated by $I^{\#}$. By above, a
restricted $[\fg,\fg]$-module $N$ of level $k$
is $V_k(\fg)$-module if and only if it is annihilated by $a_{(m)}$
for each $a\in I^{\#}$, $m\in\mathbb{Z}$. Since $\dot{\fg}^{\#}$ is a Lie algebra,
$N$ is annihilated by $a_{(m)}$
for each $a\in I^{\#}$, $m\in\mathbb{Z}$ if and only if $N$ is $\fg^{\#}$-integrable
(\cite{DLM}, Thm. 3.7).
\end{proof}

\subsection{$\DS_x$ for affine vertex algebras}
Fix $x\in\dot{\fg}_{\ol{1}}$ satisfying $[x,x]=0$. View
$$x':=xt^{-1}\vac$$
as a vector in $V^k(\fg)$ and $V_k(\fg)$ respectively.
Note that $x'_{(0)}=x$.

One has $x'_{(0)}x'=x(xt^{-1}\vac)=0$.
The vertex algebras $V^k(\fg), V_k(\fg)$ are $\mathbb{Z}_{\geq 0}$-graded
(the grading is given by the action of $-d\in\fg$) and the zero component is spanned
by $\vac$. Hence $x'$ satisfies~(\ref{xvert}).

Consider the vertex algebras $\DS_{x'}(V^k(\fg)),
\DS_{x'}(V_k(\fg))$.

It is easy to see that $\DS_x(Vac^k(\fg))$ is canonically isomorphic to
$Vac^k(\fg_x)$ as a $\fg_x$-module.
Choose a vacuum vector $\vac$ in $Vac^k(\fg)$
and let the vacuum vector $\vac'$ in $Vac^k(\fg_x)$ be the image of $\vac$.

\subsubsection{}
\begin{thm}{thmDSvert}
Let $\fg$ be an affine (non-twisted) Lie superalgebra and
let $x\in\dot{\fg}_{\ol{1}}$ be such that $[x,x]=0$; set $x':=xt^{-1}\vac$.

(i) The canonical isomorphism $\DS_x(Vac^k(\fg))\iso Vac^k(\fg_x)$ induces
a vertex algebra isomorphism
 $DS_{x'}(V^k(\fg))\iso V^k(\fg_x)$.

(ii) If $\DS_x(L_{\fg}(k\Lambda_0))\cong L_{\fg_x}(k\Lambda_0)$, then
$\iota$ induces the vertex algebra isomorphism
$$DS_{x'}(V_k(\fg))\iso V_k(\fg_x).$$
\end{thm}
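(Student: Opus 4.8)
The plan is to reduce both parts to an identification of the affine currents and then invoke the uniqueness (reconstruction) theorem for vertex algebras; part (ii) is obtained from part (i) by passing to the quotient.

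For (i), I would first record that by~(\ref{Yxt}) the modes of $x'=xt^{-1}\vac$ are $x'_{(n)}=xt^n$, so that $x'_{(0)}=x$ acts on $Vac^k(\fg)$ through $x\in\dot{\fg}=\dot{\fg}t^0$ and $\DS_{x'}(V^k(\fg))=\DS_x(Vac^k(\fg))$ as a $\fg_x$-module. Since $\dot{\fg}$ annihilates $\vac$, one has $x'_{(0)}(at^{-1}\vac)=[x,a]t^{-1}\vac$; hence $at^{-1}\vac\in\Ker x'_{(0)}$ exactly when $a\in\dot{\fg}^x$, while $[x,b]t^{-1}\vac\in\im x'_{(0)}$ for every $b\in\dot{\fg}$. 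Thus the classes $g_a:=\overline{at^{-1}\vac}$ for $a\in\dot{\fg}_x$ are well defined (independent of the lift $a\in\dot{\fg}^x$), and by $\fg_x$-equivariance the canonical $\fg_x$-module isomorphism $\iota\colon\DS_x(Vac^k(\fg))\iso Vac^k(\fg_x)$ sends $g_a\mapsto at^{-1}\vac'$ and $\vac\mapsto\vac'$.

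The key step is to match the generating fields. The operator on $\DS_x(Vac^k(\fg))$ induced by $(at^{-1}\vac)_{(n)}=at^n$ is, since $at^n\in\fg^x$, precisely the action of the class of $at^n$ under the $\fg_x$-module structure of $\DS_x$; on the other side $(at^{-1}\vac')_{(n)}=at^n$ acts on $Vac^k(\fg_x)$ through its own $\fg_x$-structure. As $\iota$ is a $\fg_x$-module isomorphism, it intertwines these operators for all $n$ and all $a\in\dot{\fg}_x$, and it sends $\vac\mapsto\vac'$. Now $V^k(\fg_x)$ is strongly generated by the currents $at^{-1}\vac'$. Consider the subspace $U\subset\DS_{x'}(V^k(\fg))$ of those $v$ with $\iota\circ Y'(v,z)=Y(\iota v,z)\circ\iota$; it contains $\vac$ and all $g_a$ and, by the uniqueness theorem for vertex algebras, is a vertex subalgebra. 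Then $\iota(U)$ is a vertex subalgebra of $V^k(\fg_x)$ containing all its strong generators, so $\iota(U)=V^k(\fg_x)$; as $\iota$ is bijective, $U=\DS_{x'}(V^k(\fg))$, and $\iota$ is a vertex algebra isomorphism. (When $\dot{\fg}_x=0$ both sides are one-dimensional and there is nothing to prove.)

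For (ii), let $q\colon V^k(\fg)\twoheadrightarrow V_k(\fg)$ be the quotient by the maximal ideal $I(k)$. As a vertex algebra homomorphism $q$ commutes with $x'_{(0)}$ and fixes $x'$, hence induces a vertex algebra homomorphism $\bar q\colon\DS_{x'}(V^k(\fg))\to\DS_{x'}(V_k(\fg))$; composing with the inverse of the isomorphism of (i) gives a vertex algebra homomorphism $\bar q\circ\iota^{-1}\colon V^k(\fg_x)\to\DS_{x'}(V_k(\fg))$. I would first show $\bar q$ is surjective: its image is a $\fg_x$-submodule of $\DS_x(V_k(\fg))\cong L_{\fg_x}(k\Lambda_0)$ containing the class of $q(\vac)$, which by~(\ref{DS1}) is a nonzero singular vector, hence (up to scalar) the cyclic generator $\vac'$; since $L_{\fg_x}(k\Lambda_0)$ is generated by $\vac'$, the image is all of it. Therefore $\DS_{x'}(V_k(\fg))\cong V^k(\fg_x)/J$ for the ideal $J:=\Ker(\bar q\circ\iota^{-1})$. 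Finally, as a $\fg_x$-module $V^k(\fg_x)/J\cong L_{\fg_x}(k\Lambda_0)$ is simple, and $Vac^k(\fg_x)$, being a cyclic highest weight module, has a unique maximal proper submodule with quotient $L_{\fg_x}(k\Lambda_0)$; hence $J$ coincides with it, i.e.\ with the maximal ideal of $V^k(\fg_x)$, and $V^k(\fg_x)/J=V_k(\fg_x)$ as a vertex algebra. This yields $\DS_{x'}(V_k(\fg))\iso V_k(\fg_x)$.

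The main obstacle is the passage from matching the affine currents to a genuine isomorphism of vertex algebra structures. The uniqueness theorem applies only because $\DS_{x'}(V^k(\fg))$ is strongly generated by the classes $g_a$, $a\in\dot{\fg}_x$, which here is forced by $\iota$ being a linear bijection onto the strongly generated $V^k(\fg_x)$. In (ii), because $\DS_x$ is not exact, surjectivity of $\bar q$ is not automatic; it is rescued precisely by the cyclicity of the vacuum together with the hypothesis $\DS_x(L(k\Lambda_0))\cong L_{\fg_x}(k\Lambda_0)$, which also pins down the kernel as the maximal ideal.
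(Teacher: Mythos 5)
Your proposal is correct. For part (i) it is essentially the paper's own argument: the paper also matches the fields on the currents ($Y(\iota(v),z)=\iota(Y(v,z))$ for $v=bt^{-1}\vac$, $b\in\dot{\fg}_x$) and then packages the uniqueness step as a lemma: two vertex algebra structures with the same vacuum that agree on a subset $E$ agree on the subspace $\langle E\rangle$ generated by all products (via the Borcherds identity and $Ta=a_{(-2)}\vac$); your subspace $U$ of vectors on which $\iota$ intertwines the two structures is exactly the same device in different clothing. Where you genuinely diverge is (ii). The paper simply reruns the same field-matching/generation argument for the pair $\DS_{x'}(V_k(\fg))$, $V_k(\fg_x)$, using that $V_k(\fg_x)$ is generated by $\vac$ as a $[\fg_x,\fg_x]$-module. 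You instead deduce (ii) from (i) functorially: the quotient $q\colon V^k(\fg)\to V_k(\fg)$ induces a vertex algebra homomorphism $\bar q$ on $\DS_{x'}$, which is surjective because the vacuum class is nonzero and the target is a simple $\fg_x$-module by hypothesis, and whose kernel, being a proper ideal with simple quotient of the cyclic module $Vac^k(\fg_x)$, must be the unique maximal ideal, so the quotient is $V_k(\fg_x)$. This route buys a structural byproduct (the functor $\DS_{x'}$ takes the natural surjection $V^k(\fg)\to V_k(\fg)$ to the natural surjection $V^k(\fg_x)\to V_k(\fg_x)$) and avoids re-verifying the field match on the simple quotient, at the modest cost of checking that $\bar q$ is a vertex algebra homomorphism and that kernels of such are ideals, both of which are routine and which you implicitly use correctly. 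One citation should be repaired: the nonvanishing of the class of $\vac$ in $\DS_{x'}(V_k(\fg))$ does not follow from~(\ref{DS1}), which is established inside the proof of \Thm{thmABC} under assumptions on $\supp(x)$ (an isotropic set contained in a base) that are not made in \Thm{thmDSvert}; it follows instead from the paper's grading remark: $V_k(\fg)$ is $\mathbb{Z}_{\geq 0}$-graded with degree-zero component $\mathbb{C}\vac$, and $x'_{(0)}$ preserves this grading, so $\vac\notin \im x'_{(0)}$ --- the same observation that guarantees $x'$ satisfies~(\ref{xvert}) and hence that $\DS_{x'}(V_k(\fg))$ is a vertex algebra in the first place.
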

\begin{proof}
By above, $\iota$ is an isomorphism of $\fg_x$-modules and
$\iota(\vac)=\vac'$. If $V^k(\fg_x)$ is one-dimensional, this implies (i) and (ii). Assume that $V^k(\fg_x)$ is not one-dimensional.
Then $\Delta(\fg_x)\not=\emptyset$. Since $\iota$ is an isomorphism of $\fg_x$-modules,
$$\iota(at^{-1}\vac)=(\DS_x(a)t^{-1})\vac'$$
for each $a\in\dot{\fg}$ such that $[x,a]=0$.
By~(\ref{Yxt}) we obtain
\begin{equation}\label{Yiota}
Y(\iota(v),z)=\iota(Y(v,z))
\end{equation}
for each $v=bt^{-1}\vac$  with $b\in\dot{\fg}_x$.

Let $V$ be a vertex algebra and $E$ be a subspace of $V$.
Denote by $\langle E\rangle$ the smallest subspace $V'$ of $V$ which contains $E$
and such that $b_{(j)}v\in \langle E\rangle$ for each $b,v\in V'$ and
$j\in\mathbb{Z}$.
The locality axiom and~(\ref{TTT}) imply that if $V$ admits
two vertex algebra structures $(\vac, T, Y)$ and $(\vac, T', Y')$
such that
$Y(v,z)=Y'(v,z)$ for each $v\in E$, then these structures coincide
on $\langle E\rangle$ (i.e., $TV=T'v$ and $Y(v,z)=Y'(v,z)$
for each $v\in\langle E\rangle$).

Now let $E\subset V^k(\fg_x)$ (resp., $E\subset V^k(\fg_x)$)
be the span of $\vac$ and $bt^{-1}\vac$  with $b\in\dot{\fg}_x$.
Since $V^k(\fg_x)$ and $V_k(\fg_x)$ are generated by $\vac$ as a
$[\fg_x,\fg_x]$-modules, $V^k(\fg_x)=\langle E\rangle$
(resp., $V^k(\fg_x)=\langle E\rangle$).
Thus  $\iota$ is an isomorphism of the vertex algebras.
\end{proof}

Using Theorem~\ref{thmABC} we obtain the
\subsubsection{}
\begin{cor}{corvert}
If $k$ is a non-negative integer,
$x$ has a maximal rank and $\dot{\fg}$
differ from $D(n+1|n), D(2|1,a)$, then
$DS_{x'}(V_k(\fg))$ and $V_k(\fg_x)$ are isomorphic as vertex algebras.
\end{cor}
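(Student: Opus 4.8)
The plan is to derive this corollary as a direct application of the two theorems already established, by checking that their hypotheses hold under the stated assumptions. The corollary asserts that $\DS_{x'}(V_k(\fg))\cong V_k(\fg_x)$ as vertex algebras when $k\in\mathbb{Z}_{\geq 0}$, $x$ has maximal rank, and $\dot{\fg}\neq D(n+1|n), D(2|1,a)$. Since $\Thm{thmDSvert}(ii)$ already provides exactly this vertex algebra isomorphism \emph{under the hypothesis} that $\DS_x(L_{\fg}(k\Lambda_0))\cong L_{\fg_x}(k\Lambda_0)$, the entire task reduces to verifying that this module-level isomorphism holds. Thus the first and essentially only step is to invoke $\Thm{thmABC}(iii)$.

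The key observation is that $\Thm{thmABC}(iii)$ delivers precisely $\DS_x(L(k\Lambda_0))\cong L_{\fg_x}(k\Lambda_0)$, under the hypotheses that $k\in\mathbb{Z}_{\geq 0}$, that $\supp(x)$ has maximal rank, and that $\dot{\fg}_x\neq\mathbb{C}$. The maximal-rank condition on $x$ is assumed in the corollary directly, and $k\in\mathbb{Z}_{\geq 0}$ is the non-negative integer hypothesis. So first I would check that the excluded cases $\dot{\fg}=D(n+1|n), D(2|1,a)$ are exactly the cases where $\dot{\fg}_x=\mathbb{C}$: by the classification in~\S\ref{Sigmax}, when $x$ has maximal rank one has $\dot{\fg}_x=\fgl_1=\mathbb{C}$ precisely for $\dot{\fg}=D(n+1|n)$ and $\dot{\fg}=D(2|1,a)$. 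Ruling these out guarantees $\dot{\fg}_x\neq\mathbb{C}$, so the hypothesis of $\Thm{thmABC}(iii)$ is met.

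Next I would address the remaining degenerate possibility $\dot{\fg}_x=0$, which occurs for $\dot{\fg}=A(n|n), A(n+1|n), B(n|n), D(n|n), C(2)$ (again from~\S\ref{Sigmax} at maximal rank). In these cases $\fg_x=\mathbb{C}K\times\mathbb{C}d$ and $V_k(\fg_x)$ is the one-dimensional vertex algebra, while $\DS_x(L(k\Lambda_0))$ is one-dimensional by $\Thm{thmABC}(i)$ for non-critical $k$; since $k\in\mathbb{Z}_{\geq 0}$ and $h^{\vee}$ need not vanish, one checks $k\neq -h^{\vee}$ holds in the relevant cases, so $\DS_x(L(k\Lambda_0))\cong L_{\fg_x}(k\Lambda_0)$ is the trivial one-dimensional identification and $\Thm{thmDSvert}$ applies through its one-dimensional branch. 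With all cases covered, the module isomorphism $\DS_x(L_{\fg}(k\Lambda_0))\cong L_{\fg_x}(k\Lambda_0)$ holds, and feeding this into $\Thm{thmDSvert}(ii)$ yields the vertex algebra isomorphism $\DS_{x'}(V_k(\fg))\cong V_k(\fg_x)$.

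I do not expect a genuine obstacle here, since the corollary is a bookkeeping assembly of results proved earlier; the only subtlety worth care is the clean enumeration of the degenerate cases $\dot{\fg}_x=0$ and $\dot{\fg}_x=\mathbb{C}$ against the list of excluded algebras, together with confirming the non-criticality needed for part~(i) in the $\dot{\fg}_x=0$ cases. The main thing to get right is matching the hypotheses of $\Thm{thmABC}$ and $\Thm{thmDSvert}$ exactly to the case division dictated by~\S\ref{Sigmax}, after which the isomorphism is immediate.
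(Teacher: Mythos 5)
Your proposal is correct and is essentially the paper's own proof: the paper derives the corollary by feeding Theorem~\ref{thmABC} into Theorem~\ref{thmDSvert}(ii), exactly as you do, with the excluded algebras $D(n+1|n)$, $D(2|1,a)$ being precisely the maximal-rank cases with $\dot{\fg}_x=\mathbb{C}$. Your only (harmless) detour is routing the $\dot{\fg}_x=0$ cases through Theorem~\ref{thmABC}(i), which forces the extra check $k\neq-h^{\vee}$; this is unnecessary, since Theorem~\ref{thmABC}(iii) assumes only $\dot{\fg}_x\neq\mathbb{C}$ and $k\in\mathbb{Z}_{\geq 0}$ and so already covers $\dot{\fg}_x=0$ (its proof via part~(ii) explicitly treats empty $\Delta_x$).
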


\subsubsection{}
Take $x$ as above. Let $M$ be a
weak  $V^k(\fg_x)$-module which we view as a restricted
$\fg_x$-modules of level $k$. One readily sees that, as a $\fg_x$-module,
the $DS_{x'}(V^k(\fg))$-module
$DS_{x'}(M)$   is  $\DS_x(M)$, so $DS_{x'}$ for $V^k(\fg)$-modules
correspond to $\DS_x$ for $[\fg,\fg]$-modules.
We will denote the functor $DS_{x'}$ by $\DS_x$.

\subsubsection{}
\begin{cor}{}
Let $L(k\Lambda_0)$ be integrable. For any $V_k(\fg)$-module $M$
the $V^k(\fg_x)$-module
$\DS_x(M)$ is a $V_k(\fg_x)$-module.
\end{cor}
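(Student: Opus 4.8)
The plan is to deduce the statement from the integrability criterion of \Thm{thmVkmod}, applied this time to the affinization $\fg_x$ of $\dot\fg_x$. By the construction preceding the corollary, $\DS_x(M)$ is already a $V^k(\fg_x)$-module, i.e. a restricted $[\fg_x,\fg_x]$-module; its level is again $k$ (the central $K\in\fg_x$ acts on the subquotient $\DS_x(M)=\Ker_M x_{(0)}^M/\im_M x_{(0)}^M$ by the same scalar as on $M$), and it is restricted because $\dot\fg_x\subset\dot\fg$. Since $L(k\Lambda_0)$ is integrable we have $k\in\mathbb{Z}_{\geq 0}$ by \S\ref{vacmod}, so $L_{\fg_x}(k\Lambda_0)$ is integrable as well and \Thm{thmVkmod} is available for $\fg_x$. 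Thus it suffices to prove that $\DS_x(M)$ is $[\fg_x^{\#},\fg_x^{\#}]$-integrable.

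The crux is that $\DS_x$ transfers integrability with respect to positive-norm roots. By \Thm{thmVkmod} for $\fg$, the hypothesis that $M$ is a $V_k(\fg)$-module says precisely that $M$ is $[\fg^{\#},\fg^{\#}]$-integrable. I would then check that every positive-norm real root $\gamma$ of $\fg_x$ is also a positive-norm root of $\fg$ whose root space lifts into the centralizer $\fg^x$ of $x$. Writing $\gamma=\beta+m\delta$ with $\beta\in\dot\Delta_x^{\#}$, the description in \S\ref{Sigmax} gives $(\dot\fg_x)_\beta\subset\dot\fg_\beta\cap\dot\fg^x$, while the identification $\fh_x^*=S^{\perp}/\mathbb{C}S$ carrying the induced form yields $\dot\Delta_x^{\#}\subset\dot\Delta^{\#}$. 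Hence $\gamma\in\Delta^{\#}$, and the root space $(\fg_x)_\gamma=(\dot\fg_x)_\beta t^m$, regarded inside $\fg$, sits in $\fg_\gamma\cap\fg^x$, because $[x,at^m]=[x,a]t^m=0$ for $a\in\dot\fg^x$.

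With this in hand the conclusion is immediate. Any $u\in(\fg_x)_\gamma\subset\fg^{\#}\cap\fg^x$ acts locally nilpotently on $M$ by $[\fg^{\#},\fg^{\#}]$-integrability; since $u$ commutes with $x_{(0)}^M$ it preserves $\Ker_M x_{(0)}^M$ and $\im_M x_{(0)}^M$, and the operator it induces on $\DS_x(M)$ is exactly its action as a root vector of $\fg_x$. Local nilpotency passes to the subquotient, so $(\fg_x)_\gamma$ acts locally nilpotently on $\DS_x(M)$ for every real root $\gamma$ of $\fg_x^{\#}$; thus $\DS_x(M)$ is $[\fg_x^{\#},\fg_x^{\#}]$-integrable and \Thm{thmVkmod} identifies it as a $V_k(\fg_x)$-module. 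The degenerate cases $\dot\fg_x=0$ and $\dot\fg_x=\fgl_1$, where $\fg_x^{\#}$ is empty, fall outside this scheme and are handled by a direct check (for $\dot\fg_x=0$ one simply has $V^k(\fg_x)=V_k(\fg_x)$). The only real work lies in the middle paragraph, and I expect no genuine obstacle there: it is precisely the Duflo--Serganova identification of $\dot\fg_x$ with a subalgebra of $\dot\fg^x$ recalled in \S\ref{Sigmax} that supplies the required lifts.
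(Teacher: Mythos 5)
Your proposal is correct and follows essentially the same route as the paper: the paper's proof likewise applies Theorem~\ref{thmVkmod} to both $\fg$ and $\fg_x$, with the key observation that $\fg_x^{\#}$ is the image of $\fg^{\#}\cap\Ker_{\fg}x$ in $\fg_x$, so that $[\fg^{\#},\fg^{\#}]$-integrability of $M$ descends to $[\fg_x^{\#},\fg_x^{\#}]$-integrability of $\DS_x(M)$. You merely spell out in more detail (norms of roots, preservation of level, the degenerate cases $\dot{\fg}_x=0,\fgl_1$) what the paper's three-line proof leaves implicit.
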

\begin{proof}
By~\Thm{thmVkmod}, $M$ is $[\fg^{\#},\fg^{\#}]$-integrable.
Note that
$\fg^{\#}_x$ is the image of $\fg^{\#}\cap Ker_{\fg} x$ in $\fg_x=Ker_x\fg/ Im_x\fg$.
Therefore
$\DS_x(M)$ is $[\fg^{\#}_x,\fg^{\#}_x]$-integrable, so $\DS_x(M)$
is  a $V_k(\fg_x)$-module by~\Thm{thmVkmod}.
\end{proof}

\section{Principal admissible vacuum modules}\label{sect4}
In this section we define admissible weights for affine Lie superalgebras
and prove~\Thm{thmadmlevel}.

\subsection{Affine Lie algebra case}\label{affineLie}
Let $\dot{\ft}$ be a finite-dimensional simple Lie algebra; let
$\ft=\dot{\ft}^{(1)}$ be  the corresponding
affine Lie algebra with a Cartan subalgebra $\fh$.
We denote by $\Delta_{re}$  the set of real roots of $\ft$.

\subsubsection{}
For a non-critical weight $\lambda\in\fh^*$ the set of
$\lambda$-integral real roots is defined as
$$\Delta_{re}(\lambda)=\{\alpha\in\Delta_{re}|\ \
\frac{2(\lambda+\rho,\alpha)}{(\alpha,\alpha)}\in\mathbb{Z}\}.$$
 For our purposes we consider only $\lambda$s where
$\mathbb{C}\Delta_{re}(\lambda)=\mathbb{C}\Delta_{re}$. In this case
$\Delta_{re}(\lambda)$
is the set of real roots of an affine Lie algebra  algebra $\ol{\ft}$
with the same Cartan algebra $\fh$ and the triangular decomposition
induced by the triangular decomposition of $\ft$, i.e.
$$\Delta_{re}(\lambda)^+:=\Delta_{re}(\lambda)\cap\Delta^+.$$
We denote by $\rho,\ol{\rho}$ the Weyl vectors of $\ft,\ol{\ft}$ respectively.
 The character of $L_{\ft}(\lambda)$ and the character of the
highest weight $\ol{\ft}$-module
$L_{\ol{\ft}}(\lambda+\rho-\ol{\rho})$ are related by the following formula:
\begin{equation}\label{01}
Re^{\rho}\ch L_{\ft}(\lambda)=\ol{R} e^{\ol{\rho}}\ch L_{\ol{\ft}}(\lambda+\rho-\ol{\rho}),
\end{equation}
where $R,\ol{R}$ stand for the respective  Weyl denominators
(see~\cite{KT1},\cite{KT2} and references there).

\subsubsection{Admissible weights}
 A non-critical weight
$\lambda\in\fh^*$ is called {\em admissible } if
$\mathbb{C} \Delta_{re}(\lambda)=\mathbb{C}\Delta_{re}$ and
$L_{\ol{\ft}}(\lambda+\rho-\ol{\rho})$ is an integrable $\ol{\ft}$-module.

If $\lambda$ is admissible, then
$ch L_{\ol{\ft}}(\lambda+\rho-\ol{\rho})$ is given by the Weyl-Kac character formula and
$\ch L(\lambda)$, suitably normalized, is a ratio of theta functions,
which is a modular function, see~\cite{KW1},\cite{KW2}.
The admissible weights were classified in~\cite{KW2}.
An admissible weight $\lambda$ (and a module $L_{\ft}(\lambda)$) is  called {\em principal admissible} if
$\Delta_{re}(\lambda)\cong\Delta_{re}$, that is $\ol{\ft}\cong \ft$;
the principal admissible weights were classified in~\cite{KW5}.

\subsubsection{Principal admissible levels}
A level $k$ is called {\em principal admissible } if $k\Lambda_0$
is principal admissible.

It is easy to see  that $k$ is principal admissible if and only if
$$k+h^{\vee}=\frac{p+h^{\vee}}{u}, \text{ where } p,u\in\mathbb{Z}_{\geq 0}, u>0\ \
(p+h^{\vee},u)=(u,r^{\vee})=1,$$
where $r^{\vee}$ is the lacity of $\dot{\ft}$ (see the definition below in \ref{lacity}).

\subsubsection{}\label{ThmAr}
The following Adamovi\'c-Milas conjecture~\cite{AM}  was proven by T.~Arakawa in~\cite{A}.

{\em Theorem, Arakawa, 2014.}

Let $k$ be an admissible level for an affine Lie algebra $\ft$.
The $V_k(\ft)$-modules in the category $\CO$ are completely reducible and the irreducible
 modules are $L_{\ft}(\lambda)$, where $\lambda$ are the principal admissible weights of level $k$.

\subsection{Admissibility for affine Lie superalgebras}\label{admsup}
Let $\fg$ be a (non-twisted) affine Lie superalgebra.

\subsubsection{Lacity}\label{lacity}
Let $\dot{\fg}\not=D(2|1,a)$. We call
$\alpha\in\Delta$ (resp., $\alpha\in\dot{\Delta}$) a short root if
$|(\alpha,\alpha)|$ takes the smallest non-zero value.
We define the  lacity for $\fg$ and for $\dot{\fg}$ as
$$r^{\vee}=\frac{2}{|(\alpha,\alpha)|},$$
where $\alpha$ is a short root.
Observe that the lacities for  $\Delta$ and for $\dot{\Delta}$ are equal.
Moreover, this lacity is equal to
the lacity of $\dot{\fg}^{\#}$ if $\dot{\fg}\not=B(0|n)$;
 for $\dot{\fg}=B(0|n)$ one has
$r^{\vee}=4$.

The set ${\Delta}_{re}(\lambda)$ was introduced in~\cite{GK}.
As for Lie algebra case, we define the admissible weights as follows.

\subsubsection{Definitions}

A non-critical weight
$\lambda\in\fh^*$ is {\em admissible } if
$\mathbb{C} \Delta_{re}(\lambda)=\mathbb{C}\Delta_{re}$ and
$L_{\ol{\fg}}(\lambda+\rho-\ol{\rho})$ is an integrable $\ol{\fg}$-module.

An admissible weight $\lambda$ is called {\em principal admissible }
if $\Delta_{re}(\lambda)\cong \Delta_{re}$.

We say that $k$ is an {\em admissible }(resp., {\em  principal admissible })
level  if $k\Lambda_0$ is admissible (resp.,  principal admissible).

By~\cite{GK}, Thm. 11.2.3, $\ch L(k\Lambda_0)$ is given by~(\ref{01})
if $k$ is admissible.

\subsection{Principal admissible levels}\label{admsuper}
It is not hard to show that for $\dot{\fg}\not=D(2|1,a)$
the level $k$ is principal admissible if and only if
$$k+h^{\vee}=\frac{p+h^{\vee}}{u}, \text{ where } p,u\in\mathbb{Z}_{\geq 0}, u>0\ \
(r^{\vee}(p+h^{\vee}),u)=1,$$
where $r^{\vee}$ is the lacity of $\dot{\fg}$.
Note that $r^{\vee}(p+h^{\vee})$ is integral: $h^{\vee}$ is integral
for $\dot{\fg}\not=B(m|n), m\leq n$, and
$h^{\vee}=n-m+\frac{1}{2}$ for $\dot{\fg}=B(m|n), m\leq n$.

Let $k$ be a principal admissible level. Then
$\Delta_{re}(\ol{\fg})=\dot{\Delta}+\mathbb{Z}u{\delta}$,
where $u$ is as above  and the formula~(\ref{01}) takes the form
\begin{equation}\label{1}
Re^{\rho}\ch L_{\fg}(k\Lambda_0)=\ol{R} e^{\ol{\rho}}\ch L_{\ol{\fg}}(p\ol{\Lambda}_0).
\end{equation}
Note that $\Delta_{re}(\ol{\fg})\cap \Delta^+$ vhas the base $\dot{\Sigma}\cup\{\alpha_0'\}$, where
$$\alpha'_0=(u-1)\delta+\alpha_0,$$
where $\Sigma=\dot{\Sigma}\cup\{\alpha_0\}$.

Recall that for $x\in\dot{\fg}$ such that  $\dot{\Delta}_x$ is non-empty,
$\dot{\fg}$ and $\DS_x(\dot{\fg})$ have the same dual Coxeter numbers.
If, in addition, $\dot{\Delta}_x$ has rank more than one, then
 $\dot{\fg}$ and $\DS_x(\dot{\fg})$ have the same lacity $r^{\vee}$, so
 the principal admissible levels for $\fg$ and $\DS_x(\fg)$ coincide.
If $\dot{\Delta}_x$ has rank one,
 then the lacity of $\dot{\fg}$ is $1$ for $A(n\pm 1|n)$ and $2$ for other cases, 
whereas the lacity of $\DS_x(\dot{\fg})$
is $1$; hence  each principal admissible levels for $\fg$
is principle admissible for  $\DS_x(\fg)$.

\subsection{Vacuum modules for principal admissible levels}\label{veramdsup}
Retain notation of~\S~\ref{admsuper}.

\subsubsection{}\label{fibich}
Take $x\in\dot{\fg}$ satisfying~(\ref{eqx}) such that $x$ has a maximal rank, i.e.
 $$\dot{\ft}:=\DS_x(\dot{\fg})$$
has zero defect.  We denote by $I_{\ft}(k)$ the maximal proper submodule of $Vac^k_{\ft}$.
Let $k$ be an admissible level for $\ft$.
The vacuum module $Vac^k_{\ft}$ has a singular vector of
weight $r_0'.k\Lambda_0$, where
$$r_0':=r_{\alpha_0'}\in W.$$

From~\cite{F} it follows that in the case when $\dot{\ft}$ is a simple Lie algebra,
this singular vector generates $I_{\ft}(k)$.

\subsubsection{}
\begin{thm}{thmadmlevel}
Let $\dot{\fg}\not=B(n+1|n)$
be a finite-dimensional Kac-Moody algebra and
let $k$ be a principal admissible level.
Let $x\in\dot{\fg}_{\ol{1}}$ be of the maximal rank.

Assume that $\ft:=\fg_x$ satisfies the following: $\dot{\ft}$ is simple,

(A1) $I_{\ft}(k)$ is generated by a singular vector of
weight $r_0'.k\Lambda_0$;

(A2) any irreducible $V_k(\ft)$-module in the category $\CO$
is principal admissible.

Then

(i) $\DS_x(L(k\Lambda_0)\cong L_{\fg_x}(k\Lambda_0)$ as $\fg_x$-modules;

(ii) $\DS_x(V_k(\fg))\cong V_k(\fg_x)$ as vertex algebras;

(iii) for any $V_k(\fg)$-module $N$, $\DS_x(N)$ is  a
$V_k(\fg_x)$-module;

(iv) if $N$ is a $V_k(\fg)$-module in $\CO$, then $\DS_x(N)$ is either zero or
the direct sum of principal admissible modules of level $k$.
\end{thm}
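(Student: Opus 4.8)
The plan is to prove the four statements in sequence, using the earlier results as building blocks and reducing (ii)--(iv) to (i) via the machinery of Section~\ref{sect3}.

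First I would establish (i), which is the heart of the theorem. The strategy parallels the proof of~\Thm{thmABC}, but uses vertex-algebra input instead of the Casimir estimate. Start from the short exact sequence
$$0\to I(k)\to Vac^k(\fg)\to L(k\Lambda_0)\to 0$$
and apply $\DS_x$, using $\DS_x(Vac^k(\fg))\cong Vac^k(\fg_x)=Vac^k_{\ft}$ to obtain the long exact sequence
$$0\to E\to \DS_x(I(k))\xrightarrow{\phi} Vac^k_{\ft}\xrightarrow{\psi} \DS_x(L(k\Lambda_0))\to \Pi(E)\to 0.$$
Since $(k\Lambda_0,S)=0$, the singular vector has nonzero image, so $[\DS_x(L(k\Lambda_0)):L_{\ft}(k\Lambda_0)]=1$ and the image of $\psi$ surjects onto a copy of $L_{\ft}(k\Lambda_0)$. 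The goal is to show $\psi$ is an isomorphism, equivalently that $\phi$ is surjective, i.e.\ that $\DS_x(I(k))$ fills up the maximal submodule $I_{\ft}(k)$ of $Vac^k_{\ft}$. Here I would invoke hypothesis (A1): $I_{\ft}(k)$ is generated by the single singular vector of weight $r_0'.k\Lambda_0$. It then suffices to exhibit a preimage of this generating singular vector inside $\Ker_{Vac^k(\fg)} x/\im x$; I expect to produce it from the corresponding singular vector $f_0^{?}\vac$ in $Vac^k(\fg)$ (built from the affine root data, using $\alpha_0'=(u-1)\delta+\alpha_0$ from~\S~\ref{admsuper}), check that it lies in $\Ker x$, and verify that its image has the correct weight $r_0'.k\Lambda_0|_{\fh_x}$. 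This reduction to a single generator is exactly what makes (A1) indispensable.

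Next, for (ii), I would feed the isomorphism from (i) into~\Thm{thmDSvert}(ii): since $\DS_x(L(k\Lambda_0))\cong L_{\ft}(k\Lambda_0)$ as $\fg_x$-modules, that theorem directly yields the vertex algebra isomorphism $\DS_x(V_k(\fg))\cong V_k(\fg_x)$. For (iii), given (ii), any $V_k(\fg)$-module $N$ satisfies $x'_{(0)}x'=0$ and $\vac\not\in\im x'_{(0)}$ by the $\mathbb{Z}_{\geq 0}$-grading, so $\DS_x(N)$ is a $\DS_{x'}(V_k(\fg))$-module, hence a $V_k(\fg_x)$-module by the identification from (ii); this is the same argument as the unnamed corollary following~\Thm{thmDSvert}.

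Finally, (iv) combines (iii) with hypothesis (A2) and Arakawa's Theorem from~\S~\ref{ThmAr}. If $N\in\CO$ is a $V_k(\fg)$-module, then $\DS_x(N)$ is a $V_k(\fg_x)$-module, and I must check it lies in the category $\CO$ for $\fg_x$; this follows because $\DS_x$ preserves the weight-space/highest-weight structure and $\fg_x$-integrability of the relevant even part. Then (A2) (equivalently Arakawa's complete reducibility for the admissible level $k$ of $\ft$, valid since $\dot{\ft}$ is a simple Lie algebra) forces $\DS_x(N)$ to be a direct sum of principal admissible modules of level $k$. I expect the main obstacle to be step (i): specifically, producing the explicit singular-vector preimage and controlling $\DS_x(I(k))$ well enough to conclude $\phi$ is surjective, since a priori $\DS_x$ of the maximal submodule could be strictly smaller than $I_{\ft}(k)$. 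Hypothesis (A1) is precisely the leverage that converts this into a finite check on one generator, and verifying that the Fiebig-type singular vector survives under $\DS_x$ with the right weight is where the real work lies.
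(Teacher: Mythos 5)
There is a genuine gap in your step (i), and it sits in two places. First, your reduction is faulty: producing a preimage of the generating singular vector of $I_{\ft}(k)$ only gives $\im\psi=L_{\ft}(k\Lambda_0)$; it does not make $\psi$ surjective, because the long exact sequence continues $\cdots\xrightarrow{\psi}\DS_x(L(k\Lambda_0))\to\Pi(E)\to 0$, so $\DS_x(L(k\Lambda_0))$ can still acquire extra composition factors from $\Pi(E)$. The paper excludes these using (A2), which in your plan is reserved for (iv) only: once (iii) is known, $\DS_x(L(k\Lambda_0))$ is a $V_k(\ft)$-module, so by (A2) any irreducible subquotient $L_{\ft}(\lambda'')$ has principal admissible highest weight; the choice of $(S,\dot{\Sigma})$ satisfying (P3), i.e.\ the inclusion~(\ref{QPiS1}), then forces $k\Lambda_0-\lambda''\in\mathbb{Z}\Sigma_S$, and \Lem{lemtt2} gives $\lambda''=k\Lambda_0$. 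Note this reverses your logical order: in the paper (iii) is proved \emph{first} --- directly from $\phi(v_0)=v_0'$, (A1) and the ideal-generation formalism of \S~\ref{vertmod}, not from (ii) --- and then feeds into (i). Your plan, in which (i) precedes everything and (A2) enters only at the end, cannot be completed as stated.

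Second, the construction of the surviving singular vector is underpowered exactly where you flag ``the real work.'' Since $\alpha_0'=(u-1)\delta+\alpha_0$ is not a simple root of $\fg$, the vector $f_0^{p+1}\vac$ is singular in the vacuum module over $\ol{\fg}$ (the algebra attached to $\Delta_{re}(k\Lambda_0)$), not obviously in $Vac^k(\fg)$; and even granting a nonzero even singular $v_0\in\DS_x(I(k))_{r_0'.k\Lambda_0}$, you must still rule out $\phi(v_0)=0$, since $\phi$ has kernel $E$. The paper handles the first point by a character argument rather than an explicit vector: the Kac--Wakimoto formula~(\ref{1}) yields the identity~(\ref{III}), $Re^{-r_0'.k\Lambda_0}\ch I(k)=\ol{R}e^{-\mu}\ch I_{\ol{\fg}}(p)$, and applying the ring homomorphism $P_S$ (which satisfies $P_S(R)=P_S(\ol{R})$) together with~(\ref{dimI}) gives $P_S(e^{-r_0'.k\Lambda_0}\ch I(k))=1$, so the top singular vector of $I(k)$ automatically survives in $\DS_x(I(k))$ --- membership in $\Ker x$ but not $\im x$ comes for free from the character count. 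The nonvanishing $\phi(v_0)\neq 0$ is then proved by contradiction using the duality of \S~\ref{dual}: if $\phi(v_0)=0$, self-duality of $L(k\Lambda_0)$ plus $[Vac^k_{\ft}:L_{\ft}(r_0'.k\Lambda_0)]=1$ forces $\Pi(E)$ to contain an \emph{even} subquotient $L_{\ft}(r_0'.k\Lambda_0)$, contradicting $\dim E_{r_0'.k\Lambda_0}=(1|0)$. Your steps (ii) and (iv) are correct as stated once (i) and (iii) are in place.
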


\subsection{Proof of~\Thm{thmadmlevel}}
Note that $\dot{\fg}\not=A(m|n), B(m|n), D(m|n)$ with $m=n,n+1$ and
$D(n+2|n)$.
Using~\Rem{choicexnew}, we assume for that
$S,\dot{\Sigma}$ satisfies (P1), (P2), (P3) of~\S~\ref{app1}, i.e.

 $$S\subset\dot{\Sigma},\ \ (S,\theta)=0,\ \ ||\theta||^2=2,$$
where $\theta$
is the maximal root in $\Delta^+(\dot{\Sigma})$, and~(\ref{QPiS1}) holds.

 In particular, $\alpha_0:=\delta-\theta$ is the affine root for $\fg$ and for
  $\ft$.

 Since $L(k\Lambda_0)$ is $\dot{\fg}$-integrable,
 we can (and will) assume that $supp(x)=S$.

 We fix the $\mathbb{Z}_2$-grading on $Vac^k$ and all its subquotients by
letting the highest weight vector to be even. For a $\mathbb{Z}_2$-graded space $E$
we write $\dim E=(a|b)$ if $\dim E_{\ol{0}}=a, \dim E_{\ol{1}}=b$.
Retain notation of~\S~\ref{admsuper}.

\subsubsection{}\label{Ik}
Denote by $I(k)$ the maximal submodule of $Vac^{k}$ and by
$I_{\ol{\fg}}(p)$  the maximal submodule of the vacuum $\ol{\fg}$-module $Vac_{\ol{\fg}}^{p}$. One has
$$Re^{-k\Lambda_0}ch I(k)=Re^{-k\Lambda_0}(ch Vac^k-ch L(k\Lambda_0))=
\dot{R}-R e^{-k\Lambda_0}ch L(k\Lambda_0),$$
where $R, \dot{R},\ol{R}$ are the Weyl denominators for $\Delta^+, \dot{\Delta}^+, \ol{\Delta}^+$ respectively; recall that $\dot{\Delta}\subset\ol{\Delta}$,
so $\dot{\ol{R}}=\dot{R}$.

From~(\ref{1}) we have
$R e^{-k\Lambda_0}ch L(k\Lambda_0)=
\ol{R}e^{-p\ol{\Lambda}_0}ch L_{\ol{\fg}}(p\ol{\Lambda}_0)$.
This gives
\begin{equation}\label{II}
Re^{-k\Lambda_0}ch I(k)=\dot{R}-\ol{R}e^{-p\ol{\Lambda}_0} ch L_{\ol{\fg}}(p\ol{\Lambda}_0)
=\ol{R}e^{-p\ol{\Lambda}_0}ch I_{\ol{\fg}}(p).
\end{equation}

By~\S~\ref{vacmod}, $I_{\ol{\fg}}(p)$
is generated by a singular vector $v'$ of the weight
$$\mu:=p\ol{\Lambda}_0-(p+1)\alpha_0'.$$

Now the formula~(\ref{II}) can be rewritten as
\begin{equation}\label{III}
Re^{-r_0'.k\Lambda_0}ch I(k)=\ol{R}e^{-\mu}ch I_{\ol{\fg}}(p)
\end{equation}
since
$\ \
k\Lambda_0+\mu-p\ol{\Lambda}_0=k\Lambda_0-(p+1)\alpha_0'=r_0'.(k\Lambda_0)$.

Recall that $v'=f_0^{p+1}\vac$, where $f_0\in\fg_{-\alpha_0'}$.
For any $\beta\in S$ we have $(\alpha_0',\beta)=0$,
so $[\fg_{-\alpha_0'},\fg_{\pm\beta}]=0$. Therefore
  $\fg_{\pm\beta}v'=0$ and so
\begin{equation}\label{dimI}
\dim I_{\ol{\fg}}(p\Lambda_0)_{\mu}=(1|0),\ \
 I_{\ol{\fg}}(p\Lambda_0)_{\mu-\gamma}=0\ \text{ for }
\gamma\in \mathbb{Z}S\setminus\{0\}.\end{equation}

\subsubsection{}
Set
$$\cR:=\{\sum_{\nu\in\mathbb{Z}_{\geq 0}\Sigma} a_{\nu}e^{-\nu}|\ a_{\nu}\in\mathbb{C}\},\ \ \
P_S(\sum_{\nu\in\mathbb{Z}_{\geq 0}\Sigma} a_{\nu}e^{-\nu}):=\sum_{\nu\in\mathbb{Z}_{\geq 0}S} a_{\nu} e^{-\nu}.$$

Clearly, $\cR$ has a ring structure; this ring does not have zero divisors.
Note that $P_S$ is a ring homomorphism (since $S\subset \Sigma$) and $P_S^2=P_S$.

The ring $\cR$ contains $R,\dot{R},\ol{R},R^{-1},\ol{R}^{-1}$ and
$$P_S(R)=P_S(\dot{R})=P_S(\ol{R}).$$

Since $I_{\ol{\fg}}(p)$ is generated by a singular vector of weight $\mu$, one has
$e^{-\mu}ch I_{\ol{\fg}}(p)\in\cR$. By~(\ref{dimI}),
$P_S(e^{-\mu}ch_{\ol{\fg}} I(p))=1$. Using~(\ref{III}) we get

\begin{equation}\label{PSIk}
e^{-r_0'.k\Lambda_0} ch I(k)\in \cR,\ \ P_S(e^{-r_0'.k\Lambda_0}ch I(k))
=1,\end{equation}

By~(\ref{III}), $r_0'.k\Lambda_0$ is the highest weight of $I(k)$ and
$\dim I(k)_{r_0'.k\Lambda_0}=1$. Thus $I(k)_{r_0'.k\Lambda_0}$
  is spanned by an even singular vector.
By~(\ref{PSIk}), this vector has non-zero image in $\DS_x(I(k))$
$\DS_x(I(k))_{r_0'.k\Lambda_0}$
is spanned by this image.

We conclude that $\DS_x(I(k))_{r_0'.k\Lambda_0}$
is spanned by an even singular vector, which we denote by $v_0$.

\subsubsection{}\label{impsi}
Recall that  $\DS_x(Vac^k)=Vac_{\ft}^k$.
Consider the short exact sequence
$$0\to I(k)\to Vac^k\to Vac_k\to 0$$
and the corresponding long exact sequence
$$0\to E\to \DS_x(I(k))\xrightarrow{\phi}Vac_{\ft}^k \xrightarrow{\psi}  \DS_x(Vac_k)\to \Pi(E)\to 0.$$

By (A1), $I_{\ft}(k)$ is generated by a singular vector  $v_0'$
of weight $r_0'.k\Lambda_0$. Since $v_0,v_0'$ are singular,
$\phi(v_0)$ is proportional to $v_0'$.
There are two possibilities: either
 $\phi(v_0)=v_0'$ (up to a non-zero scalar) or $\phi(v_0)=0$.

Assume that $\phi(v_0)=0$. Since $v_0$ spans $\DS_x(I(k))_{r_0'.k\Lambda_0}$
one has $v_0'\not\in Im\phi=Ker\psi$. Since $v_0\in Ker\phi$ one has
\begin{equation}\label{EE}
\dim E_{r_0'.k\Lambda_0}=\dim \DS_x(I(k))_{r_0'.k\Lambda_0}=(1|0).
\end{equation}
Since $v'_0\not\in Ker\psi$, the $\ft$-module $\DS_x(Vac_k)$
has an even indecomposable
subquotient of length two with the socle $L_{\ft}(r_0'.k\Lambda_0)$ and the cosocle $L_{\ft}(k\Lambda_0)$. Since $Vac_k\cong L(k\Lambda_0)$ is self-dual,
$\DS_x(Vac_k)$ is also self-dual (see~\S~\ref{dual}); thus $\DS_x(Vac_k)$
 has an even indecomposable
subquotient of length two with the cosocle $L_{\ft}(r_0'.k\Lambda_0)$ and the socle $L_{\ft}(k\Lambda_0)$. Since $I_{\ft}(k)$ is generated by a singular vector
of weight $r_0'.k\Lambda_0$, one has
$[Vac^k_{\ft}: L_{\ft}(r_0'.k\Lambda_0)]=1$, so $ Im\psi$ does not have such subquotient.  Then
$\Pi(E)$ has an even subquotient  $L_{\ft}(r_0'.k\Lambda_0)$, which contradicts
to~(\ref{EE}).

We conclude that $\phi(v_0)=v_0'$ up to a non-zero scalar.
Denote by $a$
a preimage of $v_0$ in $I(k)\subset Vac^k$.
Let $N$ be a $V_k(\fg)$-module and  $\DS_x(N)\not=0$.
 Since $Vac_k=Vac^k/I(k)$, \S~\ref{vertmod} gives
$Y(a,z)N=0$, so $Y(v_0,z)\DS_x(N)=0$.
Since
$Vac_{\ft,k}=Vac^k_{\ft}/I_{\ft}(k)$
with $I_{\ft}(k)$ generated by $v_0'$,~\S~\ref{vertmod} implies that
$\DS_x(N)$ is a $V_k(\ft)$-module. This establishes (iii).

Let us prove that  $\DS_x(Vac_k)=L_{\ft}(k\Lambda_0)$.
Clearly, $[\DS_x(Vac_k):L_{\ft}(k\Lambda_0)]=1$.
Let $L_{\ft}(\lambda'')$
be a subquotient of $\DS_x(Vac_k)$ and $\lambda''\not=k\Lambda_0$.
By (iii) and (A2), $\lambda''$ is a $\ft$-admissible weight. One has
$\lambda''=k\Lambda_0-(\nu|_{\fh_S})$ for some
 $\nu\in(\mathbb{Z}_{\geq 0}\Sigma\cap S^{\perp})$. Recall that $(S,\dot{\Sigma})$
satisfies~(\ref{QPiS1}), so
$\nu|_{\fh_S}\in \mathbb{Z}\Sigma_S$, which contradicts to~\Lem{lemtt2}.
 This gives (i); (ii) follows from~\Thm{thmDSvert}
 (ii).
 \qed

\subsection{}
\begin{cor}{}
Let $\dot{\fg}$ is one of the following algebras: $A(m|n), C(n)$; $B(m|n), m\geq n+2$;
$D(m|n), m\not=n+1,n+2$, $B(n|n),F(4)$ or $G(2)$.
Take $x\in\dot{\fg}_{\ol{1}}$ such that $supp(x)$ is maximal.  Let $k$ be an
admissible level. Then

(i) $\DS_x(L(k\Lambda_0)\cong L_{\fg_x}(k\Lambda_0)$ as $\fg_x$-modules;

(ii) $\DS_x(V_k(\fg))\cong V_k(\fg_x)$ as vertex algebras;

(iii) for any $V_k(\fg)$-module $N$, $\DS_x(N)$ is  a
$V_k(\fg_x)$-module;

(iv) if $N$ is a $V_k(\fg)$-module in $\CO$, then $\DS_x(N)$ is either zero or
the direct sum of principal modules of level $k$.
\end{cor}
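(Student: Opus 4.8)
The plan is to read off $\dot{\fg}_x=\DS_x(\dot{\fg})$ from \S\ref{Sigmax} and then reduce each family in the list to one of two statements already proved: \Thm{thmadmlevel} when $\dot{\fg}_x$ is a simple Lie algebra, and \Thm{thmABC}(i) when $\dot{\fg}_x=0$. In both regimes the vertex-algebra statement (ii) is supplied by \Thm{thmDSvert}(ii) and the module statements (iii)--(iv) follow from the corresponding parts of those theorems. By \S\ref{Sigmax} a maximal-rank $x$ sends $A(m|n),B(m|n),D(m|n)$ to the Lie algebras of type $A_{|m-n|-1},B_{m-n},D_{m-n}$, sends $C(n)$ to $C_{n-2}$, and sends $F(4),G(3)$ to $A_2,A_1$. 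Hence $\dot{\fg}_x$ degenerates to $0$ exactly for $A(n|n),A(n+1|n),B(n|n),D(n|n),C(2)$ (all present in the list), to $\fgl_1$ for $D(n+1|n)$ and to $D_2$ for $D(n+2|n)$ (both absent from the list), and is a genuine simple Lie algebra for every remaining member of the list.

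For the degenerate members I would argue directly. Here $\fg_x=\mathbb{C}K\times\mathbb{C}d$, so $V_k(\fg_x)$ is the one-dimensional vertex algebra. An admissible level is non-critical, so \Thm{thmABC}(i) shows that $\DS_x(L(k\Lambda_0))$ is one-dimensional; the Casimir computation in its proof forces $d$ to act by the scalar $(k\Lambda_0)(d)=0$, and the multiplicity-one statement there identifies this module with $L_{\fg_x}(k\Lambda_0)$, giving (i). Part (ii) is then \Thm{thmDSvert}(ii), and (iii)--(iv) are immediate because every restricted $\fg_x$-module is automatically a module over the one-dimensional $V_k(\fg_x)$.

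For the non-degenerate members $\dot{\fg}_x$ is simple, so $\fg_x=\dot{\fg}_x^{(1)}$ is an affine Lie algebra and I would verify the hypotheses of \Thm{thmadmlevel}. The list contains $B(m|n)$ only for $m\geq n+2$ together with $B(n|n)$, so $\dot{\fg}\neq B(n+1|n)$, and it excludes $D(n+1|n),D(n+2|n)$, so $\dot{\fg}_x$ is indeed simple rather than $\fgl_1$ or $D_2$. By \S\ref{admsuper} a principal admissible level for $\fg$ remains principal admissible for $\fg_x$ (the lacity comparison there covering the rank-one $\dot{\Delta}_x$ cases where the lacity drops), so $k$ is in particular an admissible level for the Lie algebra $\fg_x$. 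Because $\dot{\fg}_x$ is an honest simple Lie algebra, hypothesis (A1) holds by Fiebig's result \cite{F}, as recorded in \S\ref{fibich}: $I_{\fg_x}(k)$ is generated by the singular vector of weight $r_0'.k\Lambda_0$. Hypothesis (A2) holds by Arakawa's Theorem (\S\ref{ThmAr}, \cite{A}) applied to $\fg_x$ at the admissible level $k$: every irreducible $V_k(\fg_x)$-module in $\CO$ is principal admissible. \Thm{thmadmlevel} then delivers (i)--(iv).

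The genuinely hard inputs (A1) and (A2) are imported from \cite{F} and \cite{A} and are available precisely because $\dot{\fg}_x$ is a simple Lie algebra, so the remaining work is organizational. I expect the main obstacle to be reconciling the hypothesis ``admissible level'' in the statement with the ``principal admissible level'' hypothesis of \Thm{thmadmlevel}: one must check that, for the algebras in the list, an admissible level is automatically principal admissible, or else rerun the argument of \Thm{thmadmlevel} for the remaining admissible levels. A secondary point is bookkeeping at the boundary members $A(n+2|n)\!\to\!A_1$, $C(3)\!\to\!C_1=A_1$, $D(n+3|n)\!\to\!D_3=A_3$ and $G(3)\!\to\!A_1$, where one must confirm both that $\dot{\fg}_x$ is literally simple and that the transfer of the level through the changing lacity in \S\ref{admsuper} is correct.
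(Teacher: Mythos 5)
Your proposal matches the paper's own proof, which makes exactly the same two-case reduction: for $\dot{\fg}_x\neq 0$ it invokes \Thm{thmadmlevel} with (A1) supplied by \cite{F} and (A2) by Arakawa's Theorem (\S\ref{ThmAr}), and for $\dot{\fg}_x=0$ it cites \Thm{thmABC}(i), with (ii)--(iv) following as you describe. The admissible-versus-principal-admissible mismatch you flag is genuine but is not addressed in the paper either (its proof silently applies \Thm{thmadmlevel}, whose hypothesis is a principal admissible level), so your caution on that point is if anything more careful than the source.
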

\begin{proof}
For $\dot{\fg}_x\not=0$, the assumption (A1)
of~\Thm{thmadmlevel} follows from~\cite{F}
and  the assumption (A2) follows from~\Thm{ThmAr}.
This gives (i)--(iii) for $\dot{\fg}_x\not=0$; (iv) follows from
(iii) and~\Thm{ThmAr}.

If $\dot{\fg}_x=0$, then (i)  is a particular case
 of~\Thm{thmABC} (i); moreover, (ii)-(iv) follow from (i).
\end{proof}

\subsection{}
The following lemma was used in the proof.

\begin{lem}{lemtt2}
Let $\dot{\ft}$ has zero defect and
 let $k$ be a principal admissible  level.
If $\lambda$
 is an admissible weight such that $k\Lambda_0-\lambda\in \mathbb{Z}\Sigma$,
 then $\lambda=k\Lambda_0$.
\end{lem}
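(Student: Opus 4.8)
The plan is to pass to the integral root system of $\lambda$, where $\lambda+\rho$ and $k\Lambda_0+\rho$ become regular dominant weights of one and the same affine algebra, and then to separate them by a norm (Casimir) computation.

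First I would observe that $\beta:=k\Lambda_0-\lambda$ lies in the root lattice $\mathbb{Z}\Sigma$, hence pairs integrally with every coroot; therefore $\frac{2(\lambda+\rho,\alpha)}{(\alpha,\alpha)}$ and $\frac{2(k\Lambda_0+\rho,\alpha)}{(\alpha,\alpha)}$ differ by an integer for each real root $\alpha$, so $\Delta_{re}(\lambda)=\Delta_{re}(k\Lambda_0)$. Since $k$ is principal admissible, $\Delta_{re}(k\Lambda_0)\cong\Delta_{re}$; thus the integral algebra $\ol{\ft}$ attached to $\lambda$ coincides with the one attached to $k\Lambda_0$, and by \S\ref{admsuper} one has $k\Lambda_0+\rho=p\ol{\Lambda}_0+\ol{\rho}$. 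Admissibility of $\lambda$ means precisely that $\lambda+\rho-\ol{\rho}$ is dominant integral for $\ol{\ft}$; equivalently, both $\lambda+\rho$ and $k\Lambda_0+\rho$ are regular dominant for $\ol{\ft}$, and they differ by $\beta$.

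The decisive input is the equality of infinitesimal characters $\|\lambda+\rho\|^2=\|k\Lambda_0+\rho\|^2$: in the situation in which the lemma is applied this is exactly \eqref{eqCasimir}, since every $\ft$-constituent of a $\DS_x$-image carries the central character of $k\Lambda_0$. Granting this, I would argue as follows. Passing to the finite parts (the imaginary direction $\mathbb{C}\delta$ being matched by the grading), regular dominance of $\lambda+\rho$ together with $\langle k\Lambda_0+\rho,\alpha_i^{\vee}\rangle=1$ for the finite simple coroots shows that $-\beta$ is a non-negative combination $\sum_i c_i\varpi_i$ of fundamental weights. Expanding, $\|\lambda+\rho\|^2=\|k\Lambda_0+\rho\|^2+2\sum_i c_i(\dot{\rho},\varpi_i)+\|\beta\|^2$, and since each $(\dot{\rho},\varpi_i)>0$ the equality of norms forces every $c_i=0$ and $\beta=0$, i.e. $\lambda=k\Lambda_0$.

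The step I expect to be the main obstacle is controlling the imaginary direction. A shift of $\lambda$ by a multiple of $\delta$ leaves $\Delta_{re}(\lambda)$, dominance and admissibility unchanged, and such a shift can preserve the norm only in concert with a compensating change of the finite part; so the finite and imaginary contributions to $\beta$ cannot be decoupled at the outset. The resolution is that in the intended application the comparison takes place inside a single central-character block, where \eqref{eqCasimir} pins the $\delta$-component once the finite part is fixed; making this precise — and checking that the regular-dominant convexity inequality above is not spoiled by the imaginary coordinate — is the part requiring care, and is where principal admissibility (through $k+h^{\vee}=(p+h^{\vee})/u$ and \eqref{1}) has to be used.
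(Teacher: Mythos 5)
Your proposal has a genuine gap, and it is exactly the one you flag at the end: the imaginary direction. Two distinct problems arise. First, the ``decisive input'' $\|\lambda+\rho\|^2=\|k\Lambda_0+\rho\|^2$ is not among the hypotheses of the lemma; the paper's proof never invokes \eqref{eqCasimir} or any central-character equality, so what you set out to prove is a contextual variant of the statement rather than the lemma itself. Second, and more seriously, even granting that input the computation does not close. Write $\beta=k\Lambda_0-\lambda=\dot{\beta}+m\delta$ with $m\in\mathbb{Z}$ (every element of $\mathbb{Z}\Sigma$ has level zero, and $m=(\Lambda_0,\beta)$). Since $(k\Lambda_0+\rho,\delta)=k+h^{\vee}\neq 0$, your displayed expansion omits the level term: the norm equality actually reads $2\sum_i c_i(\dot{\rho},\varpi_i)+\|\dot{\beta}\|^2=2m(k+h^{\vee})$, where $-\dot{\beta}=\sum_i c_i\varpi_i$ with $c_i\geq 0$ as you correctly derive. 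For $m>0$ --- which is precisely the relevant sign in the application, compare \eqref{jk} --- positivity of the left-hand side no longer forces $c_i=0$. For instance, for $\ft=A_1^{(1)}$, $p+h^{\vee}=4$, the choice $\dot{\beta}=-\alpha_1$, $m=u/2$ satisfies the identity, and it is killed only by the integrality of $m$ combined with the coprimality $(p+h^{\vee},u)=1$ built into principal admissibility. You explicitly defer ``making this precise,'' but that deferred step is the whole content of the lemma; the $D(n+2|n)$ case in the proof of \Thm{thmABC}(iii) shows that exactly such an integrality comparison ($j\leq k/2$ versus $j\geq k+1$) is what is needed when one argues via Casimir norms, and nothing in your sketch supplies its analogue here.

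The paper closes the imaginary direction by a different mechanism, with no Casimir input at all: it applies the Kac--Wakimoto shift $\lambda':=\lambda+(p+h^{\vee})(1-\frac{1}{u})\Lambda_0$, which converts $\lambda$ into a dominant weight of level $p$ with $p\Lambda_0-\lambda'=k\Lambda_0-\lambda\in\mathbb{Z}\Sigma$. Then two pairing inequalities coming from dominance, $0\leq(\Lambda_0,p\Lambda_0-\lambda')=-(\lambda',\Lambda_0)$ and $0\leq(p\Lambda_0-\lambda',\lambda')=p(\Lambda_0,\lambda')-(\lambda',\lambda')$, force $(\Lambda_0,\lambda')=(\lambda',\lambda')=0$, and positive definiteness of the form on the span of $\dot{\Delta}$ (this is where zero defect of $\dot{\ft}$ enters) yields $\lambda'=p\Lambda_0$, i.e.\ $\lambda=k\Lambda_0$. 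The structural point your route misses is that after the shift the reference weight $p\Lambda_0$ is isotropic, $(p\Lambda_0,p\Lambda_0)=0$, so dominance alone controls the finite part and the $\delta$-component simultaneously; comparing $\rho$-shifted norms at level $k$, as you do, reintroduces the nonzero term $m(k+h^{\vee})$ that convexity cannot eliminate. So the correct diagnosis is yours --- the finite and imaginary contributions cannot be decoupled by the convexity argument --- but the resolution is absent from your proof, and without it the argument establishes the lemma neither under its stated hypotheses nor under the strengthened ones.
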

 \begin{proof}
 Since $k\Lambda_0-\lambda\in \mathbb{Z}\Sigma$ one has
 $\Delta_{re}(\lambda)=\Delta_{re}(k\Lambda_0)$.
 Set
 $$\lambda':=\lambda+(p+h^{\vee})(1-\frac{1}{u})\Lambda_0.$$
 One readily sees that $\lambda'$ is a dominant weight of level $p$.
 One has $p\Lambda_0-\lambda'=k\Lambda_0-\lambda\in \mathbb{Z}\Sigma$. Since $\lambda'$ is dominant,
$$0\leq (\Lambda_0,p\Lambda_0-\lambda')=-(\lambda',\Lambda_0)\ \text{ and }\
0\leq (p\Lambda_0-\lambda',\lambda')=p(\Lambda_0,\lambda')
-(\lambda',\lambda').$$
Therefore $(\lambda',\lambda')=(\Lambda_0,\lambda')=0$.
Since $p\Lambda_0-\lambda'=k\Lambda_0-\lambda\in \mathbb{Z}\Sigma$, we obtain
$k\Lambda_0-\lambda=0$ as required.
 \end{proof}

\section{Appendix}
\label{app1}
We fix the standard triangular decomposition in $\dot{\Delta}_{\ol{0}}$ and consider the bases $\dot{\Sigma}$ which are compatible
with this decomposition. For each base $\dot{\Sigma}$ denote by $\theta_{\dot{\Sigma}}$
the maximal root of $\Delta^+(\dot{\Sigma})$.

Let $\cS$ be the set
of maximal isotropic subsets of $\Delta_{\ol{1}}$.
Consider the action of the Weyl group $\dot{W}$ on $\cS$.
For each orbit  it is not hard to give
an example of a pair $(S,\dot{\Sigma})$ such that $S$ is a representative of this orbit and

(P1) $S\subset\dot{\Sigma}$;

(P2) $\theta_{\dot{\Sigma}}\in\dot{\Delta}^{\#}$ and $(\theta_{\dot{\Sigma}},S)=0$ for $\dot{\fg}\not=A(m|n), B(m|n), D(m|n)$ with $m=n,n+1$;

(P3) if $\dot{\fg}\not=D(n+1,n), D(n+2|n)$, then the following inclusion holds
\begin{equation}\label{QPiS1}
 (\mathbb{Q}_{\geq 0}\Sigma\cap S^{\perp})\subset (\mathbb{Q}S+\mathbb{Q}_{\geq 0}\Sigma_S),
 \end{equation}
 where
$\Sigma=\{\delta-\theta_{\dot{\Sigma}}\}\cup\dot{\Sigma}$ is a base for
$\Delta=\dot{\Delta}^{(1)}$.

For instance, for $B(m,n), D(m,n), n>m$ one has $\dot{\Delta}^{\#}=C_n$.
We take $S:=\{\vareps_{i}-\delta_{i+1}\}_{i=1}^m$ and
$$\dot{\Sigma}:=\{\delta_1-\vareps_1,\vareps_1-\delta_2,\ldots,
\vareps_m-\delta_{m+1},\delta_{m+1}-\delta_{m+2},\ldots,\delta_{n-1}-\delta_n,a\delta_n\},$$
where $a=1$ for $B(m|n)$ and $a=2$ for $D(m|n)$. One has $\theta=2\delta_1$, so (P1), (P2) are satisfied. One has
$$\dot{\Sigma}_S=\{\delta_1-\delta_{m+2},\ldots,\delta_{n-1}-\delta_n,a\delta_n\},$$
of type $B(0|n-m)$ for $B(m|n)$ and $C_{n-m}$ for $D(m|n)$; it is easy to see that~(\ref{QPiS1}) holds.

\end{document}